\newtheorem{thm}{Theorem}[section]
\newtheorem{cor}[thm]{Corollary}
\newtheorem{lem}[thm]{Lemma}
\newtheorem{prop}[thm]{Proposition}
\theoremstyle{definition}
\theoremstyle{remark}
\newtheorem{rem}[thm]{Remark}
\newtheorem*{ack}{Acknowledgments}
\def\z2{{\bf Z}$_{2}$}
\def\zl2{{\bf Z}$_{(2)}$}
\begin{document}

\title[Higher torsion in the homotopy groups of Moore spaces]{Infinite families of higher torsion in the homotopy groups of Moore spaces}

\author[S. Amelotte]{Steven Amelotte}
\address{Institute for Computational and Experimental Research in Mathematics, Brown University, 121 South Main Street, Providence, RI 02903, USA}
\email{steven\_amelotte@brown.edu} 

\author[F. R. Cohen]{Frederick R. Cohen}
\address{Department of Mathematics, University of Rochester, Rochester, NY 14625, USA}
\email{fred.cohen@rochester.edu} 

\author[Y. Luo]{Yuxin Luo}
\address{Department of Mathematics, University of Rochester, Rochester, NY 14625, USA}
\email{yuxinluo@rochester.edu}

\subjclass[2010]{55P35, 55P42, 55Q51, 55Q52}
\keywords{Moore space, unstable homotopy, Bockstein spectral sequence, $v_1$-periodic homotopy groups}

\begin{abstract}
We give a refinement of the stable Snaith splitting of the double loop space of a Moore space and use it to construct infinite $v_1$-periodic families of elements of order $p^{r+1}$ in the homotopy groups of mod $p^r$ Moore spaces. For odd primes $p$, our splitting implies that the homotopy groups of the mod $p^{r+1}$ Moore spectrum are summands of the unstable homotopy groups of each mod $p^r$ Moore space.
\end{abstract}

\maketitle 

\section{Introduction}

The purpose of this note is to combine three standard results in homotopy theory:
\begin{enumerate}[label=(\arabic*)]
\item the construction of elements of order $p^{r+1}$ in the homotopy groups of the mod $p^r$ Moore space $P^n(p^r)$, as described in \cite{CMN1};
\item the stable splitting of $\Omega^2P^n(p^r)$ first proved by Snaith \cite{S}; and
\item the introduction of $v_1$-periodic self-maps by Adams in his work on the image of the $J$-homomorphism. 
\end{enumerate}

In their fundamental work on the homotopy theory of Moore spaces, Cohen, Moore and Neisendorfer \cite{CMN1,CMN2, N, N1} proved that $P^n(p^r)=S^{n-1}\cup_{p^r}e^n$ has homotopy exponent exactly $p^{r+1}$ when $p$ is a prime number greater than $3$. We will refer to elements of this maximal possible order in $\pi_\ast(P^n(p^r))$ as \emph{higher torsion} elements. The main results of this paper give additional infinite families of higher torsion elements in the homotopy groups of odd primary Moore spaces which are different from
those constructed via Samelson products in \cite{CMN1}. The main technical ingredient is a slightly finer stable decomposition of $\Omega^2P^{2n+1}(p^r)$ which essentially follows from a combination of results (1) and (2) above.
The reason that this question arose is because of computations of Roman Mikhailov and Jie Wu, who asked about ``functorial elements"  of order $p^{r+1}$ in the homotopy groups of mod $p^r$ Moore spaces not given by those in \cite{CMN1}.

By considering the integral homology of the double loop space of a Moore space, it is clear that certain spherical homology classes force the classical Snaith splitting of $\Omega^2P^{2n+1}(p^r)$ to stably decompose further then previously described. This new stable splitting allows for the construction of new higher torsion elements which are detected by $K$-theory but not detected in the ordinary homology of any iterated loop space of a Moore space, unlike the elements of order $p^{r+1}$ in \cite{CMN1} which have non-trivial Hurewicz images in the homology of $\Omega^2P^{2n+1}(p^r)$ (see Lemma~\ref{odd higher Bockstein}).

The main results are described next. Recall that the Snaith splitting gives a functorial stable homotopy equivalence
\[ \Sigma^\infty\Omega^2 \Sigma^2X \simeq \Sigma^\infty\bigvee_{j=1}^\infty D_j(\Omega^2\Sigma^2X) \]
for any path-connected CW-complex $X$, where the stable summands are given by suspension spectra of the extended powers 
$D_j(\Omega^2\Sigma^2X) = \mathcal{C}_2(j)_+ \wedge_{\Sigma_j} X^{\wedge j}$
and $\mathcal{C}_2(j)$ denotes the space of~$j$ little $2$-cubes disjointly embedded in $\mathbb{R}^2$. 
In the case that $X$ is an odd-dimensional sphere $S^{2n-1}$, the stable summands $D_j(\Omega^2S^{2n+1})$ of $\Omega^2S^{2n+1}$ have been well studied; they are $p$-locally contractible unless $j \equiv 0$ or $1$ mod $p$, in which case they can be identified with suitably suspended Brown--Gitler spectra. In particular, after localizing at a prime $p$, they are stably indecomposable. Below we consider the case of an odd-dimensional Moore space and the stable summands $D_{p^k}(\Omega^2P^{2n+1}(p^r))$ which map naturally onto these Brown--Gitler spectra by the map $\Omega^2\Sigma^2q$ where $q\colon P^{2n-1}(p^r) \to S^{2n-1}$ is the pinch map.

\begin{thm} \label{splitting thm}
Suppose $p$ is prime and $n>1$.
\begin{enumerate}
\item If $p\ge 3$ and $r \ge 1$, then $D_{p^k}(\Omega^2P^{2n+1}(p^r))$ is stably homotopy equivalent to
\[ P^{2np^k-2}(p^{r+1}) \vee  X_{p^k} \]
for some finite CW-complex $X_{p^k}$ for all $k\ge 1$. 
\item If $p=2$ and $r>1$, then $D_2(\Omega^2P^{2n+1}(2^r))$ is homotopy equivalent to 
\[ P^{4n-2}(2^{r+1}) \vee  X_2 \] 
for some $4$-cell complex $X_2=P^{4n-3}(2^r) \cup CP^{4n-2}(2)$.
\item If $p=2$ and $r=1$, then $D_2(\Omega^2P^{2n+1}(2))$ is a stably indecomposable $6$-cell complex.
\end{enumerate}
\end{thm}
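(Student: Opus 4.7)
The plan is to combine the Snaith splitting with a Bockstein spectral sequence analysis to locate $\mathbb{Z}/p^{r+1}$ torsion summands in the integral homology of $D_{p^k}(\Omega^2 P^{2n+1}(p^r))$, then realize these summands geometrically as stable wedge factors using higher torsion elements in the style of Cohen--Moore--Neisendorfer.

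First I would analyze the mod $p$ homology of the extended power
\[ D_{p^k}(\Omega^2 P^{2n+1}(p^r)) = \mathcal{C}_2(p^k)_+ \wedge_{\Sigma_{p^k}} P^{2n-1}(p^r)^{\wedge p^k} \]
as the weight-$p^k$ summand of $H_*(\Omega^2\Sigma^2 P^{2n-1}(p^r); \mathbb{Z}/p)$ provided by the Snaith equivalence. Writing $u \in H_{2n-2}$ and $v \in H_{2n-1}$ for the mod $p$ generators of $H_*(P^{2n-1}(p^r); \mathbb{Z}/p)$ with $\beta_r v = u$, the $k$-fold iteration of the top Dyer--Lashof operation on $v$ produces a class $w_k$ in degree $2np^k-1$ lying in this summand. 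A Kudo--Araki-type analysis of how $p$-th power operations behave in the Bockstein spectral sequence then shows that such an operation raises Bockstein height by one, so $w_k$ has Bockstein height exactly $r+1$. This yields a $\mathbb{Z}/p^{r+1}$ summand of the integral homology concentrated in degrees $2np^k-2$ and $2np^k-1$.

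For the geometric realization in parts (1) and (2), I would use the CMN higher torsion elements: a map $\omega_k \colon P^{2np^k-1}(p^{r+1}) \to \Omega^2 P^{2n+1}(p^r)$ of order exactly $p^{r+1}$, built from iterated Samelson products. Composing with the Snaith projection onto $\Sigma^\infty D_{p^k}$ yields a stable map out of $P^{2np^k-2}(p^{r+1})$ realizing the $\mathbb{Z}/p^{r+1}$ homology summand identified above, and a standard obstruction argument along the Moore space cofibre sequence shows this map admits a stable retraction. The remaining cells assemble into the complement $X_{p^k}$. For part (2), with $p=2$, $k=1$, $r>1$, the complement works out to the expected 4-cell complex $P^{4n-3}(2^r) \cup CP^{4n-2}(2)$ built from the cells dual to $v^2$, $uv$, and their Bocksteins; the hypothesis $r>1$ ensures that the attaching maps from $X_2$ into $P^{4n-2}(2^{r+1})$ vanish after reduction, so the wedge splitting survives.

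The main obstacle is part (3). When $p=2$ and $r=1$ the Bockstein filtration collapses --- the relations $\beta_1 v = u$ and $v^2 = \mathrm{Sq}^1 v$ together force the Steenrod squares to couple every cell of $D_2(\Omega^2 P^{2n+1}(2))$ --- and the retraction argument used in parts (1) and (2) breaks down. To prove stable indecomposability, I would compute the mod 2 cohomology of the 6-cell complex together with its $\mathcal{A}_2$-module structure and exhibit, for each candidate wedge splitting, a nonzero $\mathrm{Sq}^i$ crossing the splitting; alternatively, Adams operations in connective $K$-theory or higher Hopf invariants can be used to distinguish the 6-cell complex from any nontrivial wedge. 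Ruling out all possible decompositions simultaneously is where the bulk of the technical work lies.
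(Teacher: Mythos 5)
Your overall architecture (Snaith splitting, Bockstein analysis of the weight-$p^k$ summand, geometric realization via CMN-style higher torsion elements, Steenrod-module obstruction for part (3)) matches the paper's, but the central homological claim in your first step is false, and it propagates. You assert that the top Dyer--Lashof class $w_k=Q_1^kv$ in degree $2np^k-1$ has Bockstein height exactly $r+1$ by a Kudo--Araki-type argument. In fact $Q_1^kv$ supports a nontrivial \emph{first} Bockstein: in the Cohen--Lada--May description of $H_*(\Omega^2\Sigma^2X)$ the classes $\beta^{(1)}Q_1^kv$ are new polynomial generators (this is the set $\Psi$ in Section~2, and it is used explicitly in Lemma~\ref{top homology} and in the proof of Lemma~\ref{odd lemma}, where the top two cells of $D_{p^k}$ are shown to form a mod~$p$ Moore space $P^{2np^k-1}(p)$, not a mod~$p^{r+1}$ one). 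The same phenomenon occurs for $\Omega^2S^{2n+1}$, all of whose integral homology torsion has order $p$. Note also that your version would produce a stable summand $P^{2np^k-1}(p^{r+1})$ with top cell in dimension $2np^k-1$, whereas the theorem asserts $P^{2np^k-2}(p^{r+1})$; this dimension mismatch is a symptom of the error.

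The class that actually carries the $\mathbb{Z}/p^{r+1}$ is the iterated Browder bracket $\mathrm{ad}_\lambda^{p^k-1}(v)(u)$ in degree $2np^k-2$, with $\beta^{(r+1)}$ hitting the transgressed $\sigma_k$-class in degree $2np^k-3$ (Lemma~\ref{odd higher Bockstein}). Establishing that this $\beta^{(r+1)}$ is nonzero is the real content, and it cannot be done by a direct operation-theoretic computation inside $\Omega^2P^{2n+1}(p^r)$ alone: the homology Bockstein spectral sequence of $\Omega P^{2n+1}(p^r)$ is acyclic at page $r+1$, so one must import the Cohen--Moore--Neisendorfer computation of $E_H^{r+1}(\Omega F^{2n+1}(p^r))$ for the fibre of the pinch map (Theorem~\ref{Bockstein in fibre}), push the classes $\tau_k'$, $\sigma_k'$ into $\Omega P^{2n+1}(p^r)$, adjoint into the double loop space, and then use the Snaith splitting plus the explicit weight-$p^k$ basis of Lemma~\ref{top homology} to rule out lower Bockstein differentials for degree reasons. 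Your ``standard obstruction argument'' for the stable retraction is also aimed at the wrong cells; the paper instead collapses skeleta of $D_{p^k}$ to a four-cell complex and analyzes attaching maps using $\pi_m(P^m(p^{r+1}))=0$ for $p$ odd. Your plan for part (3) is essentially the paper's (the $\mathcal{A}_2$-action of Lemma~\ref{Steenrod squares} together with $\beta^{(2)}\lambda(u,v)=Q_1u$ couples all six cells), and is considerably easier than you anticipate; no $K$-theoretic input is needed there.
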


The reason the stable splitting of $\Omega^2P^{2n+1}(p^r)$ described by Theorem \ref{splitting thm} has implications for the unstable homotopy groups of $P^{2n+1}(p^r)$ is that maps $P^{2np^k-2}(p^{r+1}) \to \Omega^2P^{2n+1}(p^r)$ admitting stable retractions exist unstably when $p$ is odd (and in only a few cases when $p=2$; see Section \ref{new families section}).

\begin{thm} \label{factorization thm}
Let $p$ be an odd prime, $r\ge 1$ and $n>1$. Then for every $k\ge 1$ there exist homotopy commutative diagrams
\[
\xymatrix{
P^{2np^k-2}(p^{r+1}) \ar[r] \ar[rd]_{E^\infty} & \Omega^2P^{2n+1}(p^r) \ar[d] \\
& QP^{2np^k-2}(p^{r+1})
} \qquad\qquad
\xymatrix @C=1pc{
P^{(4n-2)p^k-2}(p^{r+1}) \ar[r] \ar[rd]_{E^\infty} & \Omega^2P^{2n}(p^r) \ar[d] \\
& QP^{(4n-2)p^k-2}(p^{r+1})
}
\]
where $E^\infty$ is the stabilization map (i.e., unit of the adjunction $\Sigma^\infty \dashv \Omega^\infty$).
\end{thm}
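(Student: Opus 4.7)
The plan is to lift the stable retraction provided by Theorem~\ref{splitting thm}(a) to an unstable map, then reduce the second diagram to the first via an odd-primary product decomposition.

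\emph{First diagram.} Theorem~\ref{splitting thm}(a) furnishes a stable retraction $r \colon \Sigma^\infty \Omega^2 P^{2n+1}(p^r) \to \Sigma^\infty P^{2np^k-2}(p^{r+1})$ onto the Moore-space wedge summand of $D_{p^k}$, whose adjoint is precisely the right-hand vertical arrow $\Omega^2 P^{2n+1}(p^r) \to QP^{2np^k-2}(p^{r+1})$ of the diagram, and a stable section $s$ with $r \circ s \simeq \mathrm{id}$. The task is to produce an unstable map $\tilde{s} \colon P^{2np^k-2}(p^{r+1}) \to \Omega^2 P^{2n+1}(p^r)$ whose stabilization satisfies $r \circ \Sigma^\infty \tilde{s} \simeq \mathrm{id}$; once $\tilde s$ exists, the upper-right composite of the diagram stabilizes to the identity, which is exactly the diagonal $E^\infty$, so commutativity is automatic.

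To build $\tilde s$, I proceed cell by cell. First, choose an unstable class $\alpha \in \pi_{2np^k-3}(\Omega^2 P^{2n+1}(p^r)) \cong \pi_{2np^k-1}(P^{2n+1}(p^r))$ whose stabilization agrees with the bottom-cell restriction of $s$. At odd primes, such an $\alpha$ is furnished by the Cohen--Moore--Neisendorfer product decomposition of $\Omega P^{2n+1}(p^r)$: the indecomposable factors in that decomposition realize, after one further loop, unstable classes that detect the relevant Moore-space summand of the stable Snaith splitting. Next, extend $\alpha$ over the top cell of $P^{2np^k-2}(p^{r+1})$; since the Cohen--Moore--Neisendorfer exponent theorem forces $p^{r+1}\alpha = 0$, the class $\alpha$ extends along the cofibre of $p^{r+1} \colon S^{2np^k-3} \to S^{2np^k-3}$, yielding the desired $\tilde s$.

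\emph{Second diagram.} I reduce this to the first. At odd primes, the smash-product splitting $P^{2n-1}(p^r) \wedge P^{2n-1}(p^r) \simeq P^{4n-2}(p^r) \vee P^{4n-3}(p^r)$ combined with the James--Hopf map $H_2 \colon \Omega \Sigma P^{2n-1}(p^r) \to \Omega\Sigma (P^{2n-1}(p^r))^{\wedge 2}$ exhibits $\Omega P^{4n-1}(p^r)$ as a retract of $\Omega P^{2n}(p^r)$. Looping once more and applying the first diagram with $2n+1$ replaced by $4n-1$---so the bottom-cell dimension becomes $2(2n-1)p^k - 2 = (4n-2)p^k - 2$---produces a map into $\Omega^2 P^{4n-1}(p^r) \hookrightarrow \Omega^2 P^{2n}(p^r)$, giving the required map.

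The main obstacle will be controlling the stable class of $\tilde s$. The extension across the top cell of $P^{2np^k-2}(p^{r+1})$ is non-canonical, and different extensions differ by an element of $\pi_{2np^k-2}(\Omega^2 P^{2n+1}(p^r))$. Verifying that some extension actually satisfies $r \circ \Sigma^\infty \tilde s \simeq \mathrm{id}$ (rather than agreeing only on the bottom cell after $r$) requires an additional correction step: the resulting defect is a stable self-map of $\Sigma^\infty P^{2np^k-2}(p^{r+1})$ trivial on the bottom cell, and must be cancelled by pre-composing $\tilde s$ with an unstable top-cell correction $h \colon P^{2np^k-2}(p^{r+1}) \to \Omega^2 P^{2n+1}(p^r)$. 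Since $r_\ast$ is surjective on stable homotopy (as $r$ admits a section), a stable $h$ of the required class exists; producing an unstable representative is where one again invokes the odd-primary CMN decomposition to recognize the correction class as a genuine unstable element.
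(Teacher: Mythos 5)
Your overall architecture matches the paper's: produce an unstable map $P^{2np^k-2}(p^{r+1}) \to \Omega^2P^{2n+1}(p^r)$ compatible with the stable retraction coming from Theorem~\ref{splitting thm}(a), then reduce the even-dimensional case to the odd one by exhibiting $\Omega^2P^{4n-1}(p^r)$ as an unstable retract of $\Omega^2P^{2n}(p^r)$ (the paper uses the decomposition~\eqref{even decomposition}; your James--Hopf route is the same idea, and your dimension count is correct). The genuine gap is in the construction of the unstable class $\alpha$ and the verification that it hits the right stable summand. The Cohen--Moore--Neisendorfer product decomposition of $\Omega P^{2n+1}(p^r)$ is built entirely out of mod $p^r$ pieces ($T^{2n+1}\{p^r\}$ and loops on wedges of mod $p^r$ Moore spaces) and does not by itself produce elements of order $p^{r+1}$, let alone ones whose image under the Snaith retraction onto $D_{p^k}$ generates $\pi^s_{2np^k-3}(P^{2np^k-2}(p^{r+1}))\cong\mathbb{Z}/p^{r+1}$. (Neisendorfer's decomposition of the \emph{double} loop space, which does contain factors $\Omega S^{2np^k-1}\{p^{r+1}\}$, is only available for $r>1$; the case $r=1$ is exactly where it fails.) The actual source of the unstable map in the paper is the CMN Bockstein spectral sequence analysis of $\Omega F^{2n+1}(p^r)$, the fibre of the pinch map: the classes $\tau_k'(\nu)$ support nontrivial $\beta^{(r+1)}$ there, yielding maps $\delta_k'\colon P^{2np^k-1}(p^{r+1})\to\Omega F^{2n+1}(p^r)$ whose adjoints $\delta_k$ have mod $p$ Hurewicz image $\mathrm{ad}_\lambda^{p^k-1}(v)(u)$, a class of weight $p^k$; Lemma~\ref{odd higher Bockstein} shows this class survives to $E_H^{r+1}(\Omega^2P^{2n+1}(p^r))$, and Lemma~\ref{odd lemma} builds the retraction $f_k$ precisely so that $f_k\circ\Sigma^\infty\delta_k$ is a homology isomorphism. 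Without identifying the Hurewicz image of your $\alpha$ inside the weight-$p^k$ Snaith summand, you cannot conclude that $r\circ\Sigma^\infty\tilde s$ is nonzero on the bottom cell, which is the entire content of the theorem.

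Two smaller points. First, the ``main obstacle'' you flag at the end is not actually an obstacle: once $r\circ\Sigma^\infty\tilde s$ restricts to the bottom-cell inclusion, it is automatically a stable self-equivalence of $\Sigma^\infty P^{2np^k-2}(p^{r+1})$, since a self-map inducing the identity on $H_{2np^k-3}(-;\mathbb{Z}/p)$ must also be an isomorphism on the top class by compatibility with $\beta^{(r+1)}$; one then replaces $r$ by $(r\circ\Sigma^\infty\tilde s)^{-1}\circ r$, and no unstable top-cell correction is needed. Second, extending $\alpha$ over the top cell does require $p^{r+1}\alpha=0$; for $p=3$ this needs Neisendorfer's $3$-primary exponent theorem \cite{N} rather than \cite{CMN2}, whereas in the paper's construction the order bound comes for free because $\delta_k$ is defined on the mod $p^{r+1}$ Moore space from the outset.
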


The loop space decompositions of odd primary Moore spaces given in \cite{CMN1,N1} imply that the stable homotopy groups $\pi_\ast^s(P^n(p^r))$ are in a certain sense retracts of the unstable homotopy groups $\pi_\ast(P^n(p^r))$. Different loop space decompositions were used in \cite{CW} to obtain the same result for $2$-primary Moore spaces, and other examples of spaces whose stable and unstable homotopy groups share this property are given in \cite{BW}. As a consequence of Theorem~\ref{factorization thm}, the stable homotopy groups of the mod $p^{r+1}$ Moore spectrum retract off the unstable homotopy groups of each mod $p^r$ Moore space in a similar sense when $p$ is an odd prime (see Corollary~\ref{stable unstable}).

This observation clearly suggests that $\pi_\ast(P^n(p^r))$ contains many $\mathbb{Z}/p^{r+1}$ summands when~$p$ is odd. To generate explicit examples, in Section \ref{new families section} we use desuspensions of Adams maps in conjunction with Theorem~\ref{factorization thm} to construct infinite $v_1$-periodic families of higher torsion elements and obtain the following. Let $q=2(p-1)$.

\begin{thm} \label{new odd families}
Let $p$ be an odd prime, $r\ge 1$ and $n>1$. Then for all sufficiently large $k$,
\[ \pi_{2np^k-1+tqp^r}(P^{2n+1}(p^r)) \, \text{ and }\  \pi_{(4n-2)p^k-1+tqp^r}(P^{2n}(p^r)) \] 
contain $\mathbb{Z}/p^{r+1}$ summands for every $t\ge 0$. 
\end{thm}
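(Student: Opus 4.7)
The plan is to combine Theorem \ref{factorization thm} with unstable Adams $v_1$-self-maps on odd-primary mod $p^{r+1}$ Moore spaces to transport an infinite $v_1$-periodic family of higher-torsion elements from the stable homotopy of $P^{2np^k-2}(p^{r+1})$ into the unstable homotopy of $P^{2n+1}(p^r)$. The even-dimensional case runs in parallel using the second diagram of Theorem \ref{factorization thm}, so I focus on the odd case. For odd primes $p$ and $s\ge 1$ there is an unstable self-map $A\colon P^{m+qp^{s-1}}(p^s)\to P^m(p^s)$ inducing an isomorphism on complex $K$-theory whenever $m$ exceeds an explicit threshold (Neisendorfer's unstable realization of Adams' $v_1$-map). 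Specializing to $s=r+1$ produces the period $qp^r$ appearing in the statement. The hypothesis ``$k$ sufficiently large'' is precisely what is needed to push $m=2np^k-2$ past this threshold, after which every iterate $A^t$ ($t\ge 0$) is defined unstably by composition in strictly increasing dimensions.

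Let $f_k\colon P^{2np^k-2}(p^{r+1})\to\Omega^2 P^{2n+1}(p^r)$ denote the unstable map furnished by Theorem \ref{factorization thm}, and $i\colon S^{2np^k-3}\hookrightarrow P^{2np^k-2}(p^{r+1})$ the bottom-cell inclusion. For each $t\ge 0$ I will form the composite
$$
\alpha_{k,t}\colon S^{2np^k-3+tqp^r}\xrightarrow{\Sigma^{tqp^r}i} P^{2np^k-2+tqp^r}(p^{r+1})\xrightarrow{A^t} P^{2np^k-2}(p^{r+1})\xrightarrow{f_k}\Omega^2 P^{2n+1}(p^r),
$$
whose double-loop adjoint is the candidate element of $\pi_{2np^k-1+tqp^r}(P^{2n+1}(p^r))$. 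An identical construction based on the second diagram of Theorem \ref{factorization thm} produces candidates in $\pi_{(4n-2)p^k-1+tqp^r}(P^{2n}(p^r))$.

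The key verification is that $\alpha_{k,t}$ has order exactly $p^{r+1}$. Adjointing the triangle in Theorem \ref{factorization thm} and using the triangle identity for $\Sigma^\infty\dashv\Omega^\infty$ turns the unstable map $\Omega^2 P^{2n+1}(p^r)\to QP^{2np^k-2}(p^{r+1})$ into a stable retraction $\rho\colon\Sigma^\infty\Omega^2 P^{2n+1}(p^r)\to\Sigma^\infty P^{2np^k-2}(p^{r+1})$ of $\Sigma^\infty f_k$. Then $\rho\circ\Sigma^\infty\alpha_{k,t}\simeq\Sigma^\infty(A^t\circ i)$, which is the $t$-th member of the classical $v_1$-periodic family in $\pi^s_\ast(P^{2np^k-2}(p^{r+1}))$ based at the bottom cell --- an element detected by complex $K$-theory and known to have order exactly $p^{r+1}$. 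Consequently $\alpha_{k,t}$ has order at least $p^{r+1}$, and the Cohen--Moore--Neisendorfer exponent bound of $p^{r+1}$ on $P^{2n+1}(p^r)$ forces equality.

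Finally, to upgrade each such class to a $\mathbb{Z}/p^{r+1}$ direct summand I will invoke Corollary \ref{stable unstable}, whose content is precisely that the stable retraction supplied by Theorem \ref{factorization thm} identifies each $\mathbb{Z}/p^{r+1}$ summand of $\pi^s_\ast(P^{2np^k-2}(p^{r+1}))$ generated by $A^t\circ i$ with a $\mathbb{Z}/p^{r+1}$ summand of $\pi_\ast(P^{2n+1}(p^r))$ in the matching degree; since different values of $t$ land in different dimensions, the asserted infinite family of summands results. The main obstacle I anticipate is simply to make the dimensional threshold for the unstable Adams map explicit enough to quantify ``sufficiently large $k$,'' which is routine once Neisendorfer's construction of unstable $v_1$-self-maps on mod $p^{s}$ Moore spaces with $s\ge 2$ is cited.
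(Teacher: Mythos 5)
Your proposal is correct and follows essentially the same route as the paper: factor the stabilization map $E^\infty$ of $P^{2np^k-2}(p^{r+1})$ through $\Omega^2P^{2n+1}(p^r)$ via Theorem~\ref{factorization thm}, precompose with bottom-cell restrictions of iterates of an unstable Adams map (which exists once $2np^k-2$ clears the Davis--Mahowald threshold, giving the ``sufficiently large $k$''), and detect the order $p^{r+1}$ stably via $K$-theory. One small correction: your final appeal to Corollary~\ref{stable unstable} is unnecessary and does not apply uniformly in $t$ (for fixed $k$ the degrees $2np^k-3+tqp^r$ leave the stable range of $P^{2np^k-2}(p^{r+1})$); the summand conclusion instead follows directly from the fact that an element of order $p^{r+1}$ in an abelian group of exponent $p^{r+1}$ generates a $\mathbb{Z}/p^{r+1}$ direct summand, which is exactly what the exponent theorem you already cite provides.
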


\begin{rem}
A lower bound on $k$ in Theorem \ref{new odd families} is required to ensure the existence of unstable Adams maps $v_1\colon P^{\ell+qp^r}(p^{r+1}) \to P^\ell(p^{r+1})$ which induce isomorphisms in $K$-theory. See Section \ref{new families section} for a more precise statement. In particular, in the most interesting case when $r=1$, we only require $k\ge 1$. 
\end{rem}

For $p=2$, unstable maps analogous to those in Theorem~\ref{factorization thm} rarely exist for reasons related to the divisibility of the Whitehead square. When $r>1$, the inclusion of the stable summand $P^{4n-2}(2^{r+1})$ of $\Omega^2P^{2n+1}(2^r)$ given by Theorem~\ref{splitting thm}(b) exists unstably if and only if $n=2$ or $4$. As examples in these cases, we describe unstable $v_1$-periodic families of higher torsion elements in $\pi_\ast(P^5(2^r))$ and $\pi_\ast(P^9(2^r))$ for some small values of $r$ (Theorem~\ref{new even families}). 


\begin{rem}
When $p$ is odd, the $r>1$ case of Theorem~\ref{splitting thm}(a) follows quickly from an unstable product decomposition of $\Omega^2P^{2n+1}(p^r)$ proved by Neisendorfer. More precisely, \cite[Theorem~1]{N} shows that $\Omega S^{2np^k-1}\{p^{r+1}\}$ is a retract of $\Omega^2P^{2n+1}(p^r)$ for all $k\ge 1$, and it is readily checked that $P^{2np^k-2}(p^{r+1})$ is a stable retract of $\Omega S^{2np^k-1}\{p^{r+1}\}$ (cf.\ \cite[Proposition~4.1]{Am}). The $r=1$ case of Theorem~\ref{splitting thm}(a) above gives some evidence for conjectures surrounding the unstable homotopy type of $\Omega^2P^{2n+1}(p)$ considered in \cite{CMN2,G2,N double,T}.
\end{rem}

\begin{ack}
This material is based upon work partly supported by the National Science Foundation under Grant No.\ DMS-1929284 while the first author was in residence at the Institute for Computational and Experimental Research in Mathematics in Providence, RI.
\end{ack}

\section{Homology of the stable summands $D_j(\Omega^2P^{2n+1}(p^r))$}

The homology of $\Omega^n\Sigma^nX$ taken with field coefficients
as a filtered algebra was worked out in \cite{CLM}. A short summary of that information elucidates the homology of the stable Snaith summands, usually denoted $D_{n,j}X$. In the applications below where $n=2$, the $j^\mathrm{th}$ stable summand $D_{2,j}X$ of $\Omega^2\Sigma^2X$ will be denoted $D_j(\Omega^2\Sigma^2X)$. All homology groups have $\mathbb{Z}/p$ coefficients unless indicated otherwise.

Start with the connected graded vector space $$V=\bar{H}_\ast(X)$$ given by the reduced mod~$p$ homology of a path-connected space~$X$. Next, consider the reduced homology of the suspension $\Sigma X$,
denoted $\sigma V$. Form the free graded Lie algebra generated by $\sigma V$, denoted \[L[\sigma V].\]
In addition, consider the free graded restricted Lie algebra \[L^p[\sigma V].\]
This restricted Lie algebra is isomorphic to the module of primitive elements in the tensor algebra generated by $\sigma V$, so the tensor algebra is a primitively generated Hopf algebra.

A basis for $L^p[\sigma V]$ is given by the union of:
\begin{enumerate}[label=(\arabic*)]
\item a basis $\mathcal B =\{b_{\alpha} \mid \alpha \in I\}$ for $ L[\sigma(V)]_{odd}$, the elements of odd degree in $L[\sigma(V)]$;
\item a basis $\mathcal C = \{c_{\gamma} \mid \gamma \in J\}$ for $L[\sigma(V)]_{even}$, the elements of even degree in $L[\sigma(V)]$; and
\item a basis for the $(p^k)^\mathrm{th}$ powers of $L[\sigma(V)]_{even}$, say
$\mathcal P\mathcal C =  \{{c_{\gamma}}^{p^k} \mid \gamma \in J,\; k\ge 1\}$.
\end{enumerate}
It follows from the Bott--Samelson theorem that a basis for the module of primitives in the mod~$p$ homology of $\Omega \Sigma^2X$ is given by
\[ {\mathcal B} \cup {\mathcal C} \cup { \mathcal P\mathcal C}. \]

The mod $p$ homology of $\Omega^2\Sigma^2X$ can now be described using the preparations of the previous paragraph.
First, for each $(p^k)^\mathrm{th}$ power $x = {c_{\gamma}}^{p^k} \in \mathcal P \mathcal C$, let $\sigma^{-1}x$ denote the formal desuspension, lowering the degree of $x$ by one. Let $\beta\sigma^{-1}x$ denote the formal first Bockstein of $\sigma^{-1}{c_{\gamma}}^{p^k}$, with degree $|x|-2$. Let $\Psi$ denote the set of elements
given by \[ \Psi = \{ \sigma^{-1}x, \beta\sigma^{-1}x \mid x\in \mathcal P \mathcal C \}. \]

\begin{thm}[\cite{CLM}] \label{CLM thm}
If $X$ is a path-connected CW-complex and $p$ is an odd prime, then the mod~$p$ homology $H_\ast(\Omega^2\Sigma^2X)$ is isomorphic as an algebra to the free graded commutative algebra generated by
\[ \sigma^{-1} {\mathcal B} \cup \sigma^{-1} {\mathcal C} \cup \Psi. \]
\end{thm}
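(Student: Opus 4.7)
The plan is to compute $H_*(\Omega^2\Sigma^2 X;\mathbb{Z}/p)$ via the path-loop Eilenberg-Moore spectral sequence applied to $Y=\Omega\Sigma^2 X$, using the structure theorems for primitively generated Hopf algebras to recognize the $E_2$-page, and then a Poincaré series comparison against the stable summands $D_j(\Omega^2\Sigma^2 X)$ to rule out collapse failures and hidden extensions.

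First I would collect what is known about the base. Bott-Samelson gives $H_*(\Omega\Sigma^2 X)\cong T(\sigma V)$ as a Hopf algebra; it is primitively generated, so by Milnor-Moore it equals the universal enveloping algebra $U(L^p[\sigma V])$ of the restricted free Lie algebra, whose underlying module has the basis $\mathcal B\cup\mathcal C\cup\mathcal{P}\mathcal C$ set up in the excerpt. Second I would run the Eilenberg-Moore spectral sequence
\[
E_2=\mathrm{Cotor}^{H_*(\Omega\Sigma^2 X)}(\mathbb{Z}/p,\mathbb{Z}/p)\Longrightarrow H_*(\Omega^2\Sigma^2 X).
\]
For a universal enveloping algebra of a restricted Lie algebra, a Koszul / Chevalley-Eilenberg argument identifies this Cotor with a free graded-commutative algebra: each odd-degree primitive $b\in\mathcal B$ contributes a polynomial generator $\sigma^{-1}b$ of even degree, each even-degree primitive $c\in\mathcal C$ contributes an exterior generator $\sigma^{-1}c$ of odd degree, and each restricted power $c^{p^k}\in\mathcal{P}\mathcal C$ contributes a Bockstein pair $\{\sigma^{-1}c^{p^k},\,\beta\sigma^{-1}c^{p^k}\}$, one exterior and one polynomial. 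The extra pair arises because the restricted $p^k$-th power is not Cotor-primitive: its dual must be split off by an additional generator together with a Bockstein partner to record the integral obstruction.

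Third, I would verify collapse at $E_2$ and absence of multiplicative extensions by invoking the little $2$-cubes model $\Omega^2\Sigma^2 X\simeq C_2 X$ and the Snaith splitting. The homology of each extended power $D_j(\Omega^2\Sigma^2 X)=\mathcal{C}_2(j)_+\wedge_{\Sigma_j} X^{\wedge j}$ has a known Poincaré series, and the candidate free graded-commutative algebra assembles summand-by-summand into exactly this total series, forcing both collapse and triviality of extensions.

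The main obstacle is pinning down the Bockstein pair attached to each restricted $p^k$-th power; this is the place where the computation is most sensitive to the coefficient ring and where careless bookkeeping would produce the wrong algebra. The cleanest way to resolve it is to verify the pair first in the universal case $X=S^{2n-1}$, where $H_*(\Omega^2 S^{2n+1};\mathbb{Z}/p)$ and its Bockstein structure are classical and directly accessible through the Kudo-Araki / Dyer-Lashof operations $\xi_1$ and $\beta\xi_1$. Once the pairing $\{\sigma^{-1}c^{p^k},\,\beta\sigma^{-1}c^{p^k}\}$ is identified there, naturality of the Snaith splitting and of the operadic action of $\mathcal{C}_2$ propagates the statement to arbitrary path-connected $X$, since $H_*(\Omega^2\Sigma^2 X)$ depends functorially on $\bar H_*(X)$ together with its Bockstein operations.
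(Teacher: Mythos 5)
First, a point of order: the paper does not prove this statement at all --- it is quoted verbatim from Cohen--Lada--May \cite{CLM}, where the proof proceeds through the approximation theorem $C_2X\simeq \Omega^2\Sigma^2X$ and a direct computation of the $\Sigma_j$-equivariant homology of the configuration spaces $\mathcal{C}_2(j)$ with coefficients in $\bar H_\ast(X)^{\otimes j}$. Your Eilenberg--Moore route is therefore a genuinely different strategy, and your $E_2$-page bookkeeping is essentially right: the coalgebra splitting $P(y)\cong\bigotimes_{k\ge 0}T_p(y^{p^k})$ underlying your ``Bockstein pair'' for each restricted power is exactly the classical computation of $H_\ast(\Omega^2S^{2n+1};\mathbb{Z}/p)$, and the resulting generating set does match $\sigma^{-1}\mathcal B\cup\sigma^{-1}\mathcal C\cup\Psi$ as a bigraded vector space.

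However, there are two genuine gaps. The serious one is circularity in your collapse argument: you propose to force $E_2=E_\infty$ and rule out extensions by comparing Poincar\'e series against ``the known Poincar\'e series'' of the Snaith summands $D_j(\Omega^2\Sigma^2X)=\mathcal{C}_2(j)_+\wedge_{\Sigma_j}X^{\wedge j}$. But those Poincar\'e series are precisely the weight-$j$ pieces of the free commutative algebra you are trying to identify; computing $H_\ast(\mathcal{C}_2(j)_+\wedge_{\Sigma_j}X^{\wedge j})$ \emph{is} the content of the theorem, and it is exactly the hard configuration-space computation in \cite{CLM}. Without an independent determination of these groups your argument assumes its conclusion. (A related unaddressed collapse is hidden in your ``Koszul/Chevalley--Eilenberg'' step: passing from $\mathrm{Cotor}$ over $U$ of the \emph{abelianized} restricted Lie algebra to $\mathrm{Cotor}$ over $U(L^p[\sigma V])$ itself requires a May-type spectral sequence whose degeneration also needs proof when $\dim V>1$.) The second gap is the final naturality step: a nontrivial bracket $c_\gamma\in L[\sigma V]$ is not in the image of $H_\ast(\Omega^2 S^m)\to H_\ast(\Omega^2\Sigma^2X)$ for any double loop map, since such a map can only realize classes of $\sigma V$ on the bottom; so verifying the Bockstein pairs ``in the universal case $X=S^{2n-1}$'' and propagating by naturality does not reach the generators of $\Psi$ attached to genuine iterated brackets. (Minor but symptomatic: the mod $p$ algebra $H_\ast(\Omega^2\Sigma^2X)$ is a functor of the graded vector space $\bar H_\ast(X;\mathbb{Z}/p)$ alone, not of that space ``together with its Bockstein operations.'')
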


\begin{rem}
The result in case $p=2$ is different and mildly simpler (see \cite{CLM} and the remarks preceeding Lemma~\ref{Steenrod squares} below).
\end{rem}

The homology of iterated loop-suspensions are free $E_n$-algebras naturally equipped with more algebraic structure than we will need here, but we briefly mention two homology operations which will often be used below to label elements of $H_\ast(\Omega^2\Sigma^2X)$ when $X=P^{2n-1}(p^r)$; namely, the Dyer--Lashof operation 
\[ Q_1\colon H_n(\Omega^2\Sigma^2X) \longrightarrow H_{np+p-1}(\Omega^2\Sigma^2X) \] 
and the Browder bracket 
\[ \lambda\colon H_n(\Omega^2\Sigma^2X) \otimes H_m(\Omega^2\Sigma^2X) \longrightarrow H_{n+m+1}(\Omega^2\Sigma^2X). \] 
The images of $Q_1$ and $\lambda$ contain the transgressions of~$(p^k)^\mathrm{th}$ powers and iterated commutators, respectively, of primitive elements in the tensor Hopf algebra $H_\ast(\Omega\Sigma^2X)$. See \cite{CLM} for a description of the generators in $\sigma^{-1} {\mathcal B} \cup \sigma^{-1} {\mathcal C} \cup \Psi$ in terms of these operations.

We recall that the set of indecomposables $\sigma^{-1} {\mathcal B} \cup \sigma^{-1} {\mathcal C} \cup \Psi$ is also graded by weights as in \cite{CLM} in such a way that the reduced mod $p$ homology of $D_j(\Omega^2\Sigma^2X)$ is spanned by the monomials of weight $j$ in the free graded commutative algebra generated by $\sigma^{-1} {\mathcal B} \cup \sigma^{-1} {\mathcal C} \cup \Psi$.

Explicitly, for $X=P^{2n-1}(p^r)$, let $\{u,v\}$ be a basis for the graded vector space $V=\bar{H}_\ast(P^{2n-1}(p^r))$ with $|u|=2n-2$, $|v|=2n-1$ and $\beta^{(r)}v=u$. Then, if $p$ is odd, Theorem~\ref{CLM thm} implies that $H_\ast(\Omega^2P^{2n+1}(p^r))$ is a free graded commutative algebra on generators
\begin{alignat*}{2}
&\left. u,\ v,\  \lambda(u,u),\  \lambda(u,v),\  \lambda(u,\lambda(u,v)),\  \lambda(v, \lambda(u,v)), \ldots \right. &&\in\sigma^{-1}{\mathcal B}\cup\sigma^{-1}{\mathcal C} \\
&\left.\begin{aligned}
&Q_1v,\  \beta^{(1)}Q_1v,\  Q_1^2v,\  \beta^{(1)}Q_1^2v, \ldots & \\
&Q_1\lambda(u,u),\  \beta^{(1)}Q_1\lambda(u,u),\  Q_1^2\lambda(u,u),\  \beta^{(1)}Q_1^2\lambda(u,u), \ldots & \\
&Q_1\lambda(u,\lambda(u,v)),\  \beta^{(1)}Q_1\lambda(u,\lambda(u,v)), \ldots & \\
&\vdotswithin{Q_1}
\end{aligned} \right\} &&\in \Psi
\end{alignat*}
with weights defined by  
\begin{align*}
\mathrm{wt}(u) &= \mathrm{wt}(v)=1, \\ 
\mathrm{wt}(Q_1^kx) &= \mathrm{wt}(\beta^{(1)}Q_1^kx) = p^k\mathrm{wt}(x), \\
\mathrm{wt}(\lambda(x,y)) &= \mathrm{wt}(x)+\mathrm{wt}(y),
\end{align*}
and extended to all monomials by $\mathrm{wt}(xy)=\mathrm{wt}(x)+\mathrm{wt}(y)$.

We will often use the same notation $u$, $v$ to denote generators of the bottom two mod $p$ homology groups of $P^{2n+1}(p^r)$, $\Omega P^{2n+1}(p^r)$ and $\Omega^2P^{2n+1}(p^r)$, indicating degrees with subscripts when necessary.

To prove the splittings in Theorem~\ref{splitting thm}, we will need to know the top two homology groups of the Snaith summands $D_{p^k}(\Omega^2P^{2n+1}(p^r))$ explicitly. Although the list of generators above is specific to the case of $p$ odd, the following lemma holds for all primes $p$.

\begin{lem} \label{top homology}
For each $k\ge 0$, $D_{p^k}(\Omega^2P^{2n+1}(p^r))$ is a $(2np^k-2p^k-1)$-connected $(2np^k-1)$-dimensional space with:
\begin{enumerate}
\item $H_{2np^k-1}(D_{p^k}(\Omega^2P^{2n+1}(p^r))) = \mathrm{span}\{ Q_1^kv \}$,
\item $H_{2np^k-2}(D_{p^k}(\Omega^2P^{2n+1}(p^r))) = \mathrm{span}\{ \beta^{(1)}Q_1^kv, \; \mathrm{ad}_\lambda^{p^k-1}(v)(u) \}$, \label{part b}
\end{enumerate}
where $\mathrm{ad}_\lambda^{p^k-1}(v)(u)$ denotes the $(p^k-1)$-fold iterated Browder bracket $\lambda(v, \lambda(v, \ldots, \lambda(v,u)\ldots ))$.
\end{lem}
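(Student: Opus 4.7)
The plan is to exploit the weight grading on $H_*(\Omega^2\Sigma^2 X;\mathbb{Z}/p)$ from Theorem~\ref{CLM thm}: the mod-$p$ homology of $D_{p^k}(\Omega^2\Sigma^2 X)$ is spanned by the monomials of weight exactly $p^k$ in the free-commutative-algebra generators $\sigma^{-1}\mathcal{B}\cup\sigma^{-1}\mathcal{C}\cup\Psi$. For $X = P^{2n-1}(p^r)$ these generators are enumerated in the excerpt, so proving the lemma reduces to a combinatorial enumeration of weight-$p^k$ monomials in the three relevant total degrees.

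To organize the search I would introduce the \emph{deficiency} $\delta(x) := 2n\cdot w(x) - |x|$ of each generator, which is additive in the sense that $\delta(M) = 2n\cdot w(M) - |M|$ for any monomial $M$; hence a weight-$p^k$ monomial of total degree $d$ has $\delta(M) = 2np^k - d$. Direct computation from $|\lambda(x,y)|=|x|+|y|+1$ and $|Q_1 y| = p|y| + (p-1)$ on odd classes yields $\delta(u) = 2$, $\delta(v) = 1$, $\delta(Q_1^j v) = 1$, $\delta(\beta^{(1)} Q_1^j v) = 2$, and for any iterated Browder-bracket generator $b$ of weight $w$ involving $a$ copies of $u$, $\delta(b) = a+1$. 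In particular every generator satisfies $1 \leq \delta(x) \leq 2 w(x)$, with the left equality attained precisely by $v$ and by the $Q_1^j v$, and the right equality attained only by $x = u$.

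The three assertions then follow by bookkeeping in these deficiencies. Connectivity: $\delta(M) \leq 2w(M) = 2p^k$ with equality only for $M = u^{p^k}$ (which is nonzero since $|u|$ is even), so the minimum degree of a weight-$p^k$ monomial is $2np^k - 2p^k$. Top group: a weight-$p^k$ monomial of degree $2np^k - 1$ has $\delta(M) = 1$, forcing $M$ to be a single factor of deficiency $1$ and weight $p^k$, and $Q_1^k v$ is the only such generator. Next-to-top group: $\delta(M) = 2$ decomposes as either a single factor of deficiency $2$ and weight $p^k$ (the options being $\beta^{(1)}Q_1^k v$ and weight-$p^k$ Browder brackets with exactly one $u$-input) or a product of two deficiency-$1$ factors whose weights sum to $p^k$. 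For $p$ odd, the equation $p^{j_1}+p^{j_2} = p^k$ admits no solutions in $j_1, j_2 \geq 0$, so only the single-factor candidates remain; by Witt's formula the multigraded piece of $L[\sigma V]$ with one $\sigma u$ and $p^k - 1$ copies of $\sigma v$ is one-dimensional, represented by $\mathrm{ad}_\lambda^{p^k-1}(v)(u)$, while pure-$v$ brackets of weight $\geq 2$ vanish because $|\sigma v|$ is even.

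The main obstacle is the boundary case $p = 2$, where the two-factor subcase does admit the solution $p^{k-1} + p^{k-1} = p^k$ and produces the additional candidate $(Q_1^{k-1}v)^2$, nonzero in the mod-$2$ polynomial algebra since we are in characteristic $2$. I would close this case by invoking a Nishida-type Bockstein relation in the mod-$2$ Dyer--Lashof algebra identifying $(Q_1^{k-1}v)^2$ with $\beta^{(1)}Q_1^k v$ so that the extra candidate already lies in the asserted span, or alternatively by arguing directly from the polynomial-algebra description of $H_*(\Omega^2\Sigma^2 X;\mathbb{Z}/2)$ in \cite{CLM} that the homology group in degree $2np^k - 2$ is two-dimensional with the claimed basis.
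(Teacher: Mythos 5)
Your argument is correct and follows essentially the same route as the paper: both reduce to enumerating the weight-$p^k$ monomials in the generators of $H_\ast(\Omega^2P^{2n+1}(p^r))$ in the top three degrees, with the key input that pure-$v$ Browder brackets vanish since $\lambda(v,v)=0$. Your deficiency statistic is just a systematic bookkeeping device for the ``inspection'' the paper leaves implicit, and your treatment of the $p=2$ case via $\beta^{(1)}Q_1x=x^2+\lambda(x,\beta^{(1)}x)$ matches the relation the paper invokes (from \cite[III.3.10]{CLM}) in Lemma~\ref{Steenrod squares}.
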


\begin{proof}
Since the reduced mod $p$ homology of $D_{p^k}(\Omega^2P^{2n+1}(p^r))$ consists of the elements of homogeneous weight $p^k$ in $H_\ast(\Omega^2P^{2n+1}(p^r))$, the connectivity and dimension of $D_{p^k}(\Omega^2P^{2n+1}(p^r))$ follow from the fact that the weight $p^k$ monomials of lowest and highest homological degree are $u^{p^k}$ and $Q_1^kv$, respectively, with $|u^{p^k}|=2np^k-2p^k$ and $|Q_1^kv|=2np^k-1$.

Observe that any nonzero iterated Browder bracket with arguments in $\{u,v\}$ must involve $u\in H_{2n-2}(\Omega^2P^{2n+1}(p^r))$ since $\lambda(v,v)=0$ (being the transgression of the graded commutator of an even degree element with itself in the tensor algebra $H_\ast(\Omega P^{2n+1}(p^r))$). Parts (a) and (b) now follow easily by inspection of monomials of weight $p^k$ in homological degrees $2np^k-1$ and $2np^k-2$.
\end{proof}

\begin{rem}
Note that for $k=0$, the span in Lemma~\ref{top homology}\ref{part b} is $1$-dimensional since $\beta^{(1)}v$ and $\mathrm{ad}_\lambda^0(v)(u)$ coincide if $r=1$, and $\beta^{(1)}v=0$ if $r>1$. Of course, in this case $D_{p^k}(\Omega^2P^{2n+1}(p^r))=D_1(\Omega^2P^{2n+1}(p^r))$ is simply $P^{2n-1}(p^r)$. For all $k\ge 1$, $\dim H_{2np^k-2}(D_{p^k}(\Omega^2P^{2n+1}(p^r))) = 2$.
\end{rem}

In the $p=2$ case we will need to know the homology of $D_2(\Omega^2P^{2n+1}(2^r))$ as a module over the Steenrod algebra. The mod $2$ homology generators of weight~$2$ differ somewhat from those appearing in the list above for odd primes. First, since $H_\ast(\Omega^2P^{2n+1}(2^r))$ is a polynomial algebra, we have the quadratic generator $v^2$ in addition to $u^2$ and $uv$. Second, the Browder bracket $\lambda(u,u)$ is trivial since this class represents the transgression of the commutator $[u,u]=u^2+u^2=0$ in the tensor algebra $H_\ast(\Omega P^{2n+1}(2^r))=T(u,v)$ over~$\mathbb{Z}/2$. On the other hand, since $u^2\in H_{4n-2}(\Omega P^{2n+1}(2^r))$ is primitive, this class transgresses to a generator $Q_1u$ in $H_{4n-3}(\Omega^2P^{2n+1}(2^r))$, unlike in the odd primary case.

It follows that $D_2(\Omega^2P^{2n+1}(2^r))$ is a $6$-cell complex with
\begin{align*}
H_{4n-1}(D_2(\Omega^2P^{2n+1}(2^r))) &= \mathrm{span}\{ Q_1v \}, \\
H_{4n-2}(D_2(\Omega^2P^{2n+1}(2^r))) &= \mathrm{span}\{ v^2, \; \lambda(u,v) \}, \\
H_{4n-3}(D_2(\Omega^2P^{2n+1}(2^r))) &= \mathrm{span}\{ uv, \; Q_1u \}, \\
H_{4n-4}(D_2(\Omega^2P^{2n+1}(2^r))) &= \mathrm{span}\{ u^2 \}.
\end{align*}

\begin{lem} \label{Steenrod squares}
The action of the Steenrod algebra on $H_\ast(D_2(\Omega^2P^{2n+1}(2^r)))$ is determined by:
\begin{enumerate}
\item $Sq^1_\ast Q_1v = \begin{cases} v^2+\lambda(u,v) & \text{if } r=1 \\ v^2 & \text{if } r>1, \end{cases}$ \\
\item $Sq^2_\ast Q_1v = \begin{cases} Q_1u & \text{if } r=1 \\ 0 & \text{if } r>1, \end{cases}$ \\
\item $Sq^2_\ast v^2 = Sq^1_\ast uv = \begin{cases} u^2 & \text{if } r=1 \\ 0 & \text{if } r>1. \end{cases}$ \\
\end{enumerate}
\end{lem}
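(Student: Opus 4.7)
My plan is to derive all three identities from the Nishida relations for $Q_1$ at $p=2$, together with the derivation property of $Sq^1_\ast$ on the bicommutative Hopf algebra $H_\ast(\Omega^2 P^{2n+1}(2^r);\mathbb{F}_2)$ and the Bockstein values on the generators: $Sq^1_\ast u=0$, $Sq^1_\ast v=u$ when $r=1$, and $Sq^1_\ast v=0$ when $r>1$; in addition $Sq^i_\ast u=Sq^i_\ast v=0$ for $i\ge 2$ by dimension. Throughout I identify $Q_1 x = Q^{|x|+1}x$ and $x^2 = Q^{|x|}x$ in order to bring the Nishida relations of \cite{CLM} into play.

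For part (a) I would invoke the Nishida-type identity
$$Sq^1_\ast Q_1(x)\;=\;x^2+\lambda(x,\,Sq^1_\ast x),$$
which holds in any $E_2$-algebra at $p=2$; the Browder-bracket summand is the $E_2$-correction to the usual $E_\infty$-formula and can be extracted either by direct chain-level calculation on the little $2$-cubes operad or from Kudo's transgression theorem applied to $\Omega\Sigma^2 X\to P\Sigma^2 X\to\Sigma^2 X$. Substituting $x=v$ and the two cases for $Sq^1_\ast v$ then gives the claim. For part (b) the ordinary Nishida relation reads
$$Sq^2_\ast Q^{2n}(v)\;=\;\binom{2n-2}{2}Q^{2n-2}(v)+Q^{2n-1}(Sq^1_\ast v)+\cdots,$$
in which the first term vanishes because $Q^s(v)=0$ for $s<|v|$, the omitted terms vanish because $Sq^i_\ast v=0$ for $i\ge 2$, and the middle term becomes $Q^{|u|+1}(u)=Q_1 u$ when $r=1$ and $0$ when $r>1$.

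For part (c), the derivation property of $Sq^1_\ast$ yields
$$Sq^1_\ast(uv)\;=\;(Sq^1_\ast u)\,v+u\cdot Sq^1_\ast v,$$
which equals $u^2$ if $r=1$ and $0$ otherwise. To compute $Sq^2_\ast(v^2)$ I would rewrite $v^2=Q^{|v|}(v)=Q^{2n-1}(v)$ and apply Nishida with $r=2$, $s=2n-1$; the same simplifications as in (b) reduce the right-hand side to $Q^{2n-2}(Sq^1_\ast v)=Q^{|u|}(u)=u^2$ when $r=1$ and $0$ when $r>1$, matching $Sq^1_\ast(uv)$ as claimed.

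The delicate step is the $E_2$-Nishida identity for $Sq^1_\ast Q_1$ used in part (a): the Browder-bracket correction is absent from the more familiar $E_\infty$-Nishida relations and is precisely what produces the extra $\lambda(u,v)$-summand in the $r=1$ case. All of the remaining identities reduce to routine applications of the Nishida relations together with the derivation property of $Sq^1_\ast$.
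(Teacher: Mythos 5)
Your proof is correct and follows essentially the same route as the paper: your key identity $Sq^1_\ast Q_1x = x^2+\lambda(x,Sq^1_\ast x)$ is exactly the citation \cite[III.3.10]{CLM} used there, part (b) is the Nishida relation $Sq^2_\ast Q_1=Q_1Sq^1_\ast$ (which you rederive from the general Nishida formula), and part (c) is the derivation/Cartan argument (the paper applies the Cartan formula to $v\cdot v$ where you apply Nishida to $Q^{|v|}v$, an immaterial difference). The one item the paper includes that you omit is the verification, via the Cartan formula for the Browder bracket $Sq^n_\ast\lambda(x,y)=\sum_{i+j=n}\lambda(Sq^i_\ast x,Sq^j_\ast y)$, that $Sq^1_\ast\lambda(u,v)=Sq^2_\ast\lambda(u,v)=0$; this line is needed to justify the claim that the three listed relations \emph{determine} the whole Steenrod action, so you should add it.
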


\begin{proof}
By \cite[III.3.10]{CLM}, $\beta^{(1)}Q_1x=x^2+\lambda(x,\beta^{(1)}x)$ for $x\in H_\ast(\Omega^2\Sigma^2X;\mathbb{Z}/2)$ with $|x|$ odd. Part (a) follows since $\beta^{(1)}v=u$ if $r=1$, and $\beta^{(1)}v=0$ if $r>1$. 

Part (b) follows from the Nishida relation $Sq^2_\ast Q_1=Q_1Sq^1_\ast$.

For part (c), the Cartan formula and the fact that $Sq^1_\ast=\beta^{(1)}$ is a derivation on the Pontryagin ring $H_\ast(\Omega^2P^{2n+1}(2^r))$ imply that $Sq^2_\ast v^2$ and $Sq^1_\ast uv$ are as claimed.

Since the Browder bracket satisfies the Cartan formula \cite[III.1.2(7)]{CLM} 
\[ Sq^n_\ast \lambda(x,y) = \sum_{i+j=n}\lambda(Sq^i_\ast x,Sq^j_\ast y), \]
we have $Sq^2_\ast \lambda(u,v) = Sq^1_\ast \lambda(u,v) = 0$. The relations above therefore determine all nontrivial Steenrod operations in $H_\ast(D_2(\Omega^2P^{2n+1}(2^r)))$.
\end{proof}

\section{Review of the work of Cohen, Moore and Neisendorfer}

To prepare for the proofs of Theorems \ref{splitting thm} and \ref{factorization thm} in the next section, we briefly review some of the work of Cohen, Moore and Neisendorfer \cite{CMN1, N} on torsion in the homotopy groups of Moore spaces.

Recall that the mod $p^r$ homotopy groups of a space $X$ are defined by $\pi_n(X;\mathbb{Z}/p^r) = [P^n(p^r),X]$. Provided $p^r>2$, there are splittings \cite[Proposition 6.2.2]{N2}
\[ P^n(p^r) \wedge P^m(p^r) \simeq P^{n+m}(p^r) \vee P^{n+m-1}(p^r) \] 
for $n,m\ge 2$ which allow for the definition of mod $p^r$ Samelson products 
\[ \pi_n(\Omega X; \mathbb{Z}/p^r) \otimes \pi_m(\Omega X; \mathbb{Z}/p^r) \longrightarrow \pi_{n+m}(\Omega X; \mathbb{Z}/p^r). \]
Together with the Bockstein differential, this gives $\pi_\ast(\Omega X; \mathbb{Z}/p^r)$ the structure of a differential graded Lie algebra when $p\ge 5$, $r\ge 1$ and a differential graded quasi-Lie algebra when $p=3$, $r\ge 2$ (see \cite{N, N2}).
The mod $p$ Hurewicz map \[ h\colon \pi_\ast(\Omega X;\mathbb{Z}/p) \longrightarrow H_\ast(\Omega X;\mathbb{Z}/p) \] intertwines mod $p$ Samelson products with commutators in the Pontryagin ring and commutes with Bockstein differentials, thereby inducing a morphism of spectral sequences from the mod~$p$ homotopy Bockstein spectral sequence $(E_\pi^s(\Omega X), \beta^{(s)})$ to the mod~$p$ homology Bockstein spectral sequence $(E_H^s(\Omega X), \beta^{(s)})$.

Consider $\pi_\ast(\Omega P^{2n+1}(p^r);\mathbb{Z}/p)$. In degrees $2n$ and $2n-1$, denote the mod $p$ reduction of the adjoint of the identity map and its $r^\text{th}$ Bockstein by
\begin{align*} 
\nu &\colon P^{2n}(p) \longrightarrow \Omega P^{2n+1}(p^r) \;\text{ and} \\
\beta^{(r)}\nu=\mu &\colon P^{2n-1}(p) \longrightarrow \Omega P^{2n+1}(p^r),
\end{align*}
respectively. Then the Hurewicz images $h(\nu)=v$ and $h(\mu)=u$ generate $H_{2n}(\Omega P^{2n+1}(p^r);\mathbb{Z}/p)$ and $H_{2n-1}(\Omega P^{2n+1}(p^r);\mathbb{Z}/p)$, respectively, and by the Bott--Samelson theorem, 
\begin{equation} \label{tensor alg}
H_\ast(\Omega P^{2n+1}(p^r); \mathbb{Z}/p) \cong T(u,v) \cong UL(u,v), 
\end{equation} 
where $L(u,v)$ is the free differential graded Lie algebra on two generators $u$, $v$ with differential~$\beta^{(r)}v=u$.

In any graded (quasi-)Lie algebra $L$ (more generally, any graded module with an antisymmetric bracket operation), let $\mathrm{ad}(x)(y)=[x,y]$ for $x,y\in L$. Define $\mathrm{ad}^0(x)(y)=y$ and inductively define $\mathrm{ad}^k(x)(y)=\mathrm{ad}(x)(\mathrm{ad}^{k-1}(x)(y))$ for $k\ge 1$. 
To detect higher torsion in $\pi_\ast(P^{2n+1}(p^r))$, Cohen, Moore and Neisendorfer \cite{CMN1} consider the mod $p$ Samelson products
\begin{align*}
\tau_k(\nu) &= \mathrm{ad}^{p^k-1}(\nu)(\mu), \\
\sigma_k(\nu) &= \frac{1}{2p}\sum_{j=1}^{p^k-1}\binom{p^k}{j}[\mathrm{ad}^{j-1}(\nu)(\mu), \mathrm{ad}^{p^k-j-1}(\nu)(\mu)]
\end{align*}
in $\pi_\ast(\Omega P^{2n+1}(p^r); \mathbb{Z}/p)$ and their mod $p$ Hurewicz images $\tau_k(v)$, $\sigma_k(v)$ defined similarly in terms of graded commutators. Since the tensor algebra \eqref{tensor alg} is acyclic with respect to the differential $\beta^{(r)}$, the homology Bockstein spectral sequence collapses at the~$(r+1)^\text{st}$ page and no higher differentials in the homotopy Bockstein spectral sequence can be detected by the Hurewicz map:
\[ E_H^1(\Omega P^{2n+1}(p^r))=\cdots=E_H^r(\Omega P^{2n+1}(p^r))=T(u,v), \quad \bar{E}_H^{r+1}(\Omega P^{2n+1}(p^r))=0. \]
In particular, $\tau_k(v)\in H_{2np^k-1}(\Omega P^{2n+1}(p^r))$ is killed by the differential $\beta^{(r)}v^{p^k}=\tau_k(v)$ for all~$k\ge 0$. To tease out higher torsion, Cohen, Moore and Neisendorfer instead compute the homology Bockstein spectral sequence of the loops on the fibre $F^{2n+1}(p^r)$ of the pinch map~$q\colon P^{2n+1}(p^r) \to S^{2n+1}$, where lifts $\tau'_k(\nu)$, $\sigma'_k(\nu) \in \pi_\ast(\Omega F^{2n+1}(p^r); \mathbb{Z}/p)$ of $\tau_k(\nu)$, $\sigma_k(\nu)$ and their Hurewicz images $\tau'_k(v)$, $\sigma'_k(v)$ are shown to survive to the $(r+1)^\text{st}$ page, at least when~$p$ is odd.

\begin{thm}[{\cite[Theorem~10.3]{CMN1}}] \label{Bockstein in fibre}
Let $p$ be an odd prime and $r\ge 1$. Then there is an isomorphism of differential graded Hopf algebras
\[ E_H^{r+1}(\Omega F^{2n+1}(p^r)) \cong \Lambda(\tau'_0(v), \tau'_1(v), \tau'_2(v),\ldots) \otimes \mathbb{Z}/p\left[\sigma'_1(v),\sigma'_2(v),\ldots\right] \]
where $|\tau'_k(v)|=2np^k-1$, $|\sigma'_k(v)|=2np^k-2$ and \[ \beta^{(r+1)}\tau'_k(v) = \ell\sigma'_k(v), \ \ell\neq 0,\] for $k\ge 1$. 
\end{thm}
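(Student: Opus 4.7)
The plan is to compute the mod~$p$ Bockstein spectral sequence of $\Omega F^{2n+1}(p^r)$ by pulling information back along the loop fibration $\Omega F^{2n+1}(p^r)\to\Omega P^{2n+1}(p^r)\xrightarrow{\Omega q}\Omega S^{2n+1}$, exploiting the complete description $H_*(\Omega P^{2n+1}(p^r);\mathbb{Z}/p)=T(u,v)=UL(u,v)$ with $\beta^{(r)}v=u$. First I would produce the lifts $\tau'_k(\nu)$ and $\sigma'_k(\nu)$: the composite $P^{2n-1}(p)\xrightarrow{\mu}\Omega P^{2n+1}(p^r)\xrightarrow{\Omega q}\Omega S^{2n+1}$ is null because its adjoint $P^{2n}(p)\to S^{2n+1}$ is null for dimensional reasons, and since each summand of $\tau_k(\nu)$ and $\sigma_k(\nu)$ is an iterated Samelson product containing at least one factor of $\mu$ and $\Omega q$ is an $H$-map, the classes $\Omega q\circ\tau_k(\nu)$ and $\Omega q\circ\sigma_k(\nu)$ vanish in $\pi_*(\Omega S^{2n+1};\mathbb{Z}/p)$, yielding lifts in $\pi_*(\Omega F^{2n+1}(p^r);\mathbb{Z}/p)$.

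Second, I would identify $H_*(\Omega F^{2n+1}(p^r);\mathbb{Z}/p)$ as a differential graded Hopf algebra. Since $\Omega q$ is an $H$-map, it induces a surjective Hopf algebra map $T(u,v)\to \mathbb{Z}/p[w]$ sending $v\mapsto w$ and $u\mapsto 0$. By the Milnor--Moore theorem applied to the resulting short exact sequence of connected Hopf algebras (equivalently, via the Serre spectral sequence for the loop fibration), one obtains $H_*(\Omega F^{2n+1}(p^r);\mathbb{Z}/p)\cong UL'$, where $L'\subseteq L(u,v)$ is the kernel Lie algebra. A Hall-basis argument presents $L'$ as a free graded (quasi-)Lie algebra on the iterated brackets $\mathrm{ad}^k(v)(u)$ for $k\ge 0$; in particular it contains the distinguished Lie words $\tau_k(v)=\mathrm{ad}^{p^k-1}(v)(u)$ of weight $p^k$.

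Third, I would run the Bockstein spectral sequence on $UL'$. The first $r-1$ Bocksteins vanish on $L'$, while $\beta^{(r)}$ is the unique derivation extending $\beta^{(r)}v=u$. A careful pairing argument, using that elements of $L'$ other than the $\tau_k(v)$ either pair off into $\beta^{(r)}$-acyclic couples or are themselves boundaries of decomposables (driven by the identity $\beta^{(r)}v^{p^k}=\tau_k(v)+(\text{decomposables})$ valid in $T(u,v)$), shows that $E^{r+1}$ is the free graded commutative algebra on $\tau'_k(v)$ for $k\ge 0$ and $\sigma'_k(v)$ for $k\ge 1$; the Hopf algebra structure follows from PBW applied to the surviving sub-(quasi-)Lie algebra. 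To compute the higher differential $\beta^{(r+1)}\tau'_k(v)$, I would take an integral chain representative of $\tau_k(v)$ whose boundary is divisible by $p^{r+1}$, and use the binomial identity
\[ p\,\sigma_k(\nu)=\sum_{j=1}^{p^k-1}\binom{p^k}{j}\bigl[\mathrm{ad}^{j-1}(\nu)(\mu),\mathrm{ad}^{p^k-j-1}(\nu)(\mu)\bigr] \]
together with the expansion of $(v+u)^{p^k}$ in $T(u,v)$ to identify the result as $\ell\sigma'_k(v)$ for a scalar $\ell\not\equiv 0\pmod p$.

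The main obstacle is this last computation: verifying that a suitable choice of lifts yields $\beta^{(r+1)}\tau'_k(v)$ equal \emph{exactly} to a unit multiple of $\sigma'_k(v)$ with no stray contributions, and extracting the value of $\ell$ from the binomial coefficients $\binom{p^k}{j}$ modulo successive powers of~$p$. The odd-primary hypothesis is essential both for the graded (quasi-)Lie algebra identities and for the mod~$p$ Hurewicz map to intertwine Bocksteins with the Leibniz rule on Samelson products, as required to transport the homological computation back to homotopy.
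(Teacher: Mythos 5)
This statement is not proved in the paper at all: it is Theorem~10.3 of Cohen--Moore--Neisendorfer, imported as a black box (the bracketed citation in the theorem header is the paper's entire ``proof''). So the only meaningful comparison is with the original argument in \cite{CMN1}, and your outline does reproduce its broad shape: lift $\tau_k(\nu)$ and $\sigma_k(\nu)$ to $\Omega F^{2n+1}(p^r)$ using that $\Omega q$ is a loop map killing $\mu$, hence killing every Samelson product containing $\mu$ as a factor; identify $H_\ast(\Omega F^{2n+1}(p^r);\mathbb{Z}/p)$ with $UL'$, where $L'$ is the kernel of $L(u,v)\to L(v)$ and is free on the brackets $\mathrm{ad}^k(v)(u)$, $k\ge 0$; and then compute the Bockstein spectral sequence of $UL'$. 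Those are exactly the moves of \cite{CMN1}, which spends several sections carrying them out.

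As a proof, however, your sketch has genuine gaps precisely where the work lives. First, the claim that $E^{r+1}$ is the stated free commutative algebra rests in \cite{CMN1} on two substantial results: that $H(UL)\cong UH(L)$ for a free differential graded Lie algebra over $\mathbb{Z}/p$ with $p$ odd, and an explicit computation that $H(L',\beta^{(r)})$ is an abelian quasi-Lie algebra with basis $\{\tau_k(v)\}_{k\ge 0}\cup\{\sigma_k(v)\}_{k\ge 1}$. A ``careful pairing argument'' on generators cannot substitute for this, because $\beta^{(r)}$ does not preserve the generating set of $L'$: already $\beta^{(r)}\mathrm{ad}(v)(u)=[u,u]=2u^2\neq 0$ is decomposable in $L'$, so there is no K\"unneth-type cancellation of generator pairs, and one genuinely needs the enveloping-algebra theorem together with a basis argument for $H(L')$. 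Second, the nonvanishing of $\ell$ modulo $p$ in $\beta^{(r+1)}\tau'_k(v)=\ell\sigma'_k(v)$ --- which you correctly flag as the main obstacle --- is the crux of the entire theorem (it is what produces the order $p^{r+1}$ torsion downstream) and is not established by the outline; it requires lifting to chains with higher coefficients, expanding $d(\tilde v^{p^k})$, and checking that the $\frac{1}{p}\binom{p^k}{j}$ contributions assemble to a unit multiple of $\sigma'_k(v)$ rather than cancelling against the decomposable error terms in $\beta^{(r)}v^{p^k}=\tau_k(v)+(\text{decomposables})$. Neither gap is a wrong turn --- you are pointed at the right argument --- but the two omitted computations are the theorem.
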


Since the homology classes $\tau'_k(v)=h(\tau'_k(\nu))$ support nontrivial Bocksteins $\beta^{(r+1)}$ for $k\ge 1$, the same is true of the mod $p$ homotopy classes $\tau'_k(\nu) \in \pi_{2np^k-1}(\Omega F^{2n+1}(p^r); \mathbb{Z}/p)$. It follows that there exist maps
\begin{equation} \label{delta' map}
\delta'_k \colon P^{2np^k-1}(p^{r+1}) \longrightarrow \Omega F^{2n+1}(p^r)
\end{equation}
for each $k\ge 1$ which satisfy $(\delta'_k)_\ast(v_{2np^k-1}) = \tau'_k(v)$ in mod $p$ homology and induce split monomorphisms in integral homology.

To exhibit nontrivial classes in the image of $\beta^{(r+1)}$ in $E_\pi^{r+1}(\Omega P^{2n+1}(p^r))$, Cohen, Moore and Neisendorfer show that the composition of $\beta^{(r+1)}\tau'_k(\nu)$ with $\Omega F^{2n+1}(p^r) \to \Omega P^{2n+1}(p^r)$ does not represent zero in $E_\pi^{r+1}(\Omega P^{2n+1}(p^r))$, thereby proving the following.

\begin{thm}[\cite{CMN1,N}] \label{CMN torsion}
Let $p$ be an odd prime and $r\ge 1$. Then $\pi_{2np^k-1}(P^{2n+1}(p^r))$ contains a $\mathbb{Z}/p^{r+1}$ summand for every $k\ge 1$.
\end{thm}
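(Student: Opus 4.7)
The plan is to propagate the non-trivial $(r+1)$th-page Bockstein of Theorem~\ref{Bockstein in fibre} in $E^{r+1}_H(\Omega F^{2n+1}(p^r))$ first to the homotopy Bockstein spectral sequence of $\Omega F^{2n+1}(p^r)$, then push it forward along the loop of the fibre inclusion $\Omega\iota : \Omega F^{2n+1}(p^r) \to \Omega P^{2n+1}(p^r)$, and finally adjoint to land in $\pi_\ast(P^{2n+1}(p^r))$. Because the mod~$p$ Hurewicz homomorphism induces a morphism of Bockstein spectral sequences intertwining every higher Bockstein $\beta^{(s)}$, the classes $\tau'_k(v)$, $\sigma'_k(v) \in E^{r+1}_H(\Omega F^{2n+1}(p^r))$ of Theorem~\ref{Bockstein in fibre} lift to mod~$p$ homotopy classes $\tau'_k(\nu), \sigma'_k(\nu) \in \pi_\ast(\Omega F^{2n+1}(p^r);\mathbb{Z}/p)$ with $\beta^{(r+1)}\tau'_k(\nu) = \ell\sigma'_k(\nu) \neq 0$ in $E^{r+1}_\pi(\Omega F^{2n+1}(p^r))$. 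This is exactly what produces the maps $\delta'_k$ of \eqref{delta' map}, whose restrictions to the bottom cell realise elements of $\pi_{2np^k-2}(\Omega F^{2n+1}(p^r))$ of order exactly $p^{r+1}$.

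Composing with $\Omega\iota$ and taking adjoints produces $\alpha_k \in \pi_{2np^k-1}(P^{2n+1}(p^r))$ with $p^{r+1}\alpha_k = 0$, and the heart of the argument is to verify that $\alpha_k$ has order \emph{exactly} $p^{r+1}$, equivalently that $\beta^{(r+1)}(\iota_\ast\tau'_k(\nu)) \neq 0$ in $E^{r+1}_\pi(\Omega P^{2n+1}(p^r))$. Homology alone is too coarse: in $T(u,v) = H_\ast(\Omega P^{2n+1}(p^r);\mathbb{Z}/p)$ one has $\iota_\ast \tau'_k(v) = \tau_k(v) = \beta^{(r)}(v^{p^k})$, so $\tau_k(v)$ already bounds at the $r$th page and, as the excerpt notes, $\bar{E}^{r+1}_H(\Omega P^{2n+1}(p^r)) = 0$ in these bidegrees. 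The saving observation is that $v^{p^k}$ is not a Lie element of $T(u,v) = UL(u,v)$, and hence is not in the image of the mod~$p$ Hurewicz map from $\pi_\ast(\Omega P^{2n+1}(p^r);\mathbb{Z}/p)$; consequently $\tau_k(\nu) = \iota_\ast\tau'_k(\nu)$ is not a $\beta^{(r)}$-boundary in homotopy and survives to $E^{r+1}_\pi(\Omega P^{2n+1}(p^r))$. To upgrade this survival to $\beta^{(r+1)}\tau_k(\nu) = \ell\iota_\ast\sigma'_k(\nu) \neq 0$ on the $(r+1)$th homotopy page, I would invoke the Cohen-Moore-Neisendorfer and Neisendorfer product decompositions of $\Omega P^{2n+1}(p^r)$ at odd primes, which exhibit $\Omega F^{2n+1}(p^r)$ (or at any rate the summands carrying $\tau'_k(\nu)$ and $\sigma'_k(\nu)$) as direct retracts of $\Omega P^{2n+1}(p^r)$; under such a retraction the higher Bockstein transfers faithfully from fibre to total space.

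Finally, since $\pi_\ast(P^{2n+1}(p^r))$ is a finitely generated abelian $p$-group whose maximal torsion exponent is $p^{r+1}$ (by the CMN/Neisendorfer exponent theorem), an element of maximal order $p^{r+1}$ generates a cyclic direct summand isomorphic to $\mathbb{Z}/p^{r+1}$, and $\alpha_k$ supplies the required summand in $\pi_{2np^k-1}(P^{2n+1}(p^r))$. The principal technical obstacle is the pushforward step: transferring a higher Bockstein across $\Omega\iota$ in the face of a homology Bockstein spectral sequence that degenerates prematurely on the target side.
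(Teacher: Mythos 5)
First, a framing remark: the paper does not reprove this theorem; it quotes it from \cite{CMN1,N}, indicating only that the key point is to show that the image of $\beta^{(r+1)}\tau'_k(\nu)$ under $\Omega F^{2n+1}(p^r)\to\Omega P^{2n+1}(p^r)$ is nonzero in $E^{r+1}_\pi(\Omega P^{2n+1}(p^r))$. Your outline follows that strategy up to exactly this hard step, and correctly diagnoses why it is hard (the homology Bockstein spectral sequence of $\Omega P^{2n+1}(p^r)$ is useless since $T(u,v)$ is $\beta^{(r)}$-acyclic). But both mechanisms you offer to get past it have genuine gaps. The ``saving observation'' is wrong as stated: the mod $p$ Hurewicz image of $\pi_\ast(\Omega X;\mathbb{Z}/p)$ lies in the \emph{primitives} of $H_\ast(\Omega X;\mathbb{Z}/p)$, and for $p$ odd the primitives of $T(u,v)$ form the free \emph{restricted} Lie algebra $L^p[u,v]$, which contains $v^{p^k}$ -- this is precisely the set $\mathcal{PC}$ of $p^k$-th powers in Section 2 of the paper. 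So ``not a Lie element'' does not imply ``not a Hurewicz image,'' and whether $v^{p^k}$ is spherical mod $p$ is a genuinely nontrivial question. Even granting the premise, naturality only forces a hypothetical bounding class $x$ with $\beta^{(r)}x=\tau_k(\nu)$ to satisfy $h(x)\in v^{p^k}+\ker\beta^{(r)}$, so you would have to exclude a whole coset, not one element. Most importantly, you are arguing that the \emph{source} $\tau_k(\nu)$ survives to $E^{r+1}_\pi$, whereas what the theorem needs is that the \emph{target} $\sigma_k(\nu)=\iota_\ast\sigma'_k(\nu)$, in degree $2np^k-2$, is not a $\beta^{(s)}$-boundary for $s\le r$; your argument does not address this, and it is exactly the step Cohen--Moore--Neisendorfer carry out using the fibration $\Omega F^{2n+1}(p^r)\to\Omega P^{2n+1}(p^r)\to\Omega S^{2n+1}$ and the long exact sequence of mod $p$ homotopy groups.

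The fallback via retractions also does not hold as stated: $\Omega F^{2n+1}(p^r)$ is not a retract of $\Omega P^{2n+1}(p^r)$, and the parenthetical weakening -- that the factors carrying $\tau'_k(\nu)$, $\sigma'_k(\nu)$ retract off -- is essentially Neisendorfer's theorem that $\Omega S^{2np^k-1}\{p^{r+1}\}$ is a retract of $\Omega^2P^{2n+1}(p^r)$ (see the Remark following Theorem~\ref{new odd families} and \cite[Theorem~1]{N}). That result does give Theorem~\ref{CMN torsion} immediately, since $\pi_{2np^k-2}(S^{2np^k-1}\{p^{r+1}\})\cong\mathbb{Z}/p^{r+1}$ then splits off $\pi_{2np^k-1}(P^{2n+1}(p^r))$; but it is a later, independent theorem that cannot be ``invoked'' to patch the Bockstein computation without simply replacing it. Finally, the appeal to the exponent theorem at the end is both unnecessary and delicate: a nonzero $\beta^{(r+1)}$ differential in the homotopy Bockstein spectral sequence produces a $\mathbb{Z}/p^{r+1}$ direct summand of the integral homotopy group by the standard structure theory of Bockstein spectral sequences, with no need to know that $p^{r+1}$ is the exponent of the whole group; and the paper only asserts that exponent for $p>3$, whereas the theorem is claimed for all odd primes.
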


\begin{rem}
It follows from Theorem \ref{CMN torsion} and the loop space decomposition for even-dimensional odd primary Moore spaces \cite[Theorem~1.1]{CMN1}
\begin{equation} \label{even decomposition}
\Omega P^{2n+2}(p^r) \simeq S^{2n+1}\{p^r\} \times \Omega \left( \bigvee_{j=0}^\infty P^{4n+2nj+3}(p^r) \right)
\end{equation}
that $\pi_\ast(P^n(p^r))$ contains $\mathbb{Z}/p^{r+1}$ summands for all $n\ge 3$ when $p$ is odd.
\end{rem}

\section{Splittings of $D_{p^k}(\Omega^2P^{2n+1}(p^r))$}

In this section we prove Theorem \ref{splitting thm} in a series of lemmas and discuss the stable homotopy type of $\Omega^2P^{2n+1}(p^r)$. We assume throughout that $n>1$.

The higher torsion discussed in the previous section is not reflected in the homology of the single loop space of a Moore space since $H_\ast(\Omega P^n(p^r))$ is acyclic with respect to $\beta^{(r)}$. The next lemma shows that it becomes visible in homology after looping twice.

\begin{lem} \label{odd higher Bockstein}
Let $p$ be an odd prime and $r\ge 1$. Then in the mod $p$ homology Bockstein spectral sequence of $\Omega^2P^{2n+1}(p^r)$, we have 
\[ \beta^{(r+1)} \mathrm{ad}_\lambda^{p^k-1}(v)(u) \neq 0 \]
in $E_H^{r+1}(\Omega^2P^{2n+1}(p^r))$ for each $k\ge 1$. Moreover, there exist maps 
\[ \delta_k \colon P^{2np^k-2}(p^{r+1}) \longrightarrow \Omega^2P^{2n+1}(p^r) \]
for each $k\ge 1$ which satisfy $(\delta_k)_\ast(v_{2np^k-2}) = \mathrm{ad}_\lambda^{p^k-1}(v)(u)$ in mod $p$ homology and induce split monomorphisms in integral homology.
\end{lem}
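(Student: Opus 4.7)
The plan is to construct $\delta_k$ from the Cohen--Moore--Neisendorfer map $\delta'_k$ of Theorem~\ref{Bockstein in fibre} by looping once and precomposing with the adjunction unit. Writing $j\colon F^{2n+1}(p^r) \to P^{2n+1}(p^r)$ for the fiber inclusion and $E\colon X \to \Omega\Sigma X$ for the unit of the $\Sigma \dashv \Omega$ adjunction, take $\delta_k$ to be the composite
\[ P^{2np^k-2}(p^{r+1}) \xrightarrow{E} \Omega P^{2np^k-1}(p^{r+1}) \xrightarrow{\Omega \delta'_k} \Omega^2 F^{2n+1}(p^r) \xrightarrow{\Omega^2 j} \Omega^2 P^{2n+1}(p^r). \]

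The core computation is to identify $(\delta_k)_\ast v_{2np^k-2}$ in mod $p$ homology. By repeated naturality of the homology suspension $\sigma_\ast\colon H_m(\Omega^2 Y) \to H_{m+1}(\Omega Y)$, together with the standard identity $\sigma_\ast E_\ast(v_{2np^k-2}) = v_{2np^k-1}$ and the Hurewicz formula $(\delta'_k)_\ast v_{2np^k-1} = \tau'_k(v)$ from Theorem~\ref{Bockstein in fibre}, one obtains
\[ \sigma_\ast\bigl((\delta_k)_\ast v_{2np^k-2}\bigr) = \mathrm{ad}^{p^k-1}(v)(u) \in H_{2np^k-1}(\Omega P^{2n+1}(p^r)). \]
On the other hand, the Cohen--Lada--May relation $\sigma_\ast \lambda(x,y) = \pm[\sigma_\ast x, \sigma_\ast y]$, iterated $p^k-1$ times, gives $\sigma_\ast \mathrm{ad}_\lambda^{p^k-1}(v)(u) = \pm\mathrm{ad}^{p^k-1}(v)(u)$. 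Lemma~\ref{top homology} lists the only weight $p^k$ indecomposables of $H_{2np^k-2}(\Omega^2 P^{2n+1}(p^r))$ as $\mathrm{ad}_\lambda^{p^k-1}(v)(u)$ and $\beta^{(1)}Q_1^k v$; since $\sigma_\ast$ annihilates decomposables and the suspension of $\beta^{(1)}Q_1^k v$ is readily distinguished from any iterated commutator in $T(u,v)$, this pins down $(\delta_k)_\ast v_{2np^k-2} = \pm\mathrm{ad}_\lambda^{p^k-1}(v)(u)$ (a parity/degree check confirms the absence of decomposable weight $p^k$ monomials in this degree for odd $p$).

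With this identification in hand, both assertions of the lemma follow by naturality of the mod $p$ Bockstein spectral sequence applied to $\delta_k$. The differential $\beta^{(r+1)} v_{2np^k-2} = u_{2np^k-3}$ is nontrivial on the $(r+1)$-page for $P^{2np^k-2}(p^{r+1})$, so $\beta^{(r+1)} \mathrm{ad}_\lambda^{p^k-1}(v)(u) \neq 0$ in $E_H^{r+1}(\Omega^2 P^{2n+1}(p^r))$. This in turn forces the integral lift of the Hurewicz image to have order exactly $p^{r+1}$, yielding a $\mathbb{Z}/p^{r+1}$ direct summand and hence the desired split monomorphism in integral homology. The main obstacle is the homology identification in the middle paragraph: pinning $(\delta_k)_\ast v_{2np^k-2}$ down exactly requires using the weight grading of Lemma~\ref{top homology} to isolate the two candidate generators, and then the structural distinction between iterated Browder brackets and Frobenius-type classes in $T(u,v)$ to rule out the $\beta^{(1)}Q_1^k v$ contribution.
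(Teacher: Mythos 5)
Your construction of $\delta_k$ (adjoint of $j\circ\delta'_k$, i.e.\ $\Omega^2 j\circ\Omega\delta'_k\circ E$) and the identification of $(\delta_k)_\ast v_{2np^k-2}$ via the homology suspension are the same as in the paper. But there is a genuine gap at the crucial step: you claim that $\beta^{(r+1)}\mathrm{ad}_\lambda^{p^k-1}(v)(u)\neq 0$ ``follows by naturality of the Bockstein spectral sequence applied to $\delta_k$.'' Naturality only gives $\beta^{(r+1)}[(\delta_k)_\ast v]=[(\delta_k)_\ast u]$ in $E_H^{r+1}$; it does \emph{not} tell you that the class $(\delta_k)_\ast u_{2np^k-3}=\ell\sigma_k^\lambda(v)$ is nonzero on the $(r+1)$-page of the \emph{target}. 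A nonzero differential in the source can perfectly well map to a zero differential in the target if the image classes have already died, i.e.\ become $\beta^{(s)}$-boundaries for some $s\le r$. This is not a hypothetical worry: applying your naturality argument one loop level up, to the composite $P^{2np^k-1}(p^{r+1})\to\Omega P^{2n+1}(p^r)$, would ``prove'' $\beta^{(r+1)}\tau_k(v)\neq 0$ in $E_H^{r+1}(\Omega P^{2n+1}(p^r))$ --- which is false, since $\beta^{(r)}v^{p^k}=\tau_k(v)$ and the reduced $E^{r+1}$-page of the single loop space vanishes. The entire content of the lemma is that this collapse does \emph{not} happen after looping twice, and that requires an argument.

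The missing argument is supplied in the paper by the weight filtration: the Bocksteins respect the Snaith splitting, $\tau_k^\lambda(v)$ and $\sigma_k^\lambda(v)$ have weight $p^k$, and by Lemma~\ref{top homology} the only weight-$p^k$ classes in degree $2np^k-2$ are spanned by $\beta^{(1)}Q_1^kv$ and $\mathrm{ad}_\lambda^{p^k-1}(v)(u)$, both of which are $\beta^{(s)}$-cycles for $s\le r$; hence nothing can hit $\sigma_k^\lambda(v)$ before page $r+1$, it survives, and the differential is forced. You should add this (or an equivalent survival argument). A secondary, more cosmetic point: your resolution of the indeterminacy in $(\delta_k)_\ast v_{2np^k-2}$ is misdirected --- the issue is not whether $\sigma_\ast\beta^{(1)}Q_1^kv$ can be confused with an iterated commutator, but that $\beta^{(1)}Q_1^kv$ lies in the kernel of $\sigma_\ast$, so it could appear as an undetected extra summand. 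This does not damage the proof (that class is a $\beta^{(1)}$-boundary, so it vanishes on every page from $E^2$ onward and does not affect the $E^{r+1}$ computation), but as stated your ``pinning down'' argument does not establish the exact equality you claim.
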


\begin{proof}
Consider the composite 
\[ P^{2np^k-1}(p^{r+1}) \stackrel{\delta'_k}{\longrightarrow} \Omega F^{2n+1}(p^r) \longrightarrow \Omega P^{2n+1}(p^r), \]
where $\delta'_k$ is the map from \eqref{delta' map} and the second map is the fibre inclusion of the looped pinch map $\Omega q\colon \Omega P^{2n+1}(p^r) \to \Omega S^{2n+1}$. In mod $p$ homology, $(\delta'_k)_\ast(v_{2np^k-1})=\tau'_k(v)$ and $(\delta'_k)_\ast(u_{2np^k-2})=\ell\sigma'_k(v)$, $\ell\neq 0$, by Theorem~\ref{Bockstein in fibre} and naturality of the Bockstein. Since these classes map to $\tau_k(v)$, $\ell\sigma_k(v) \in H_\ast(\Omega P^{2n+1}(p^r))$, the composite above induces a monomorphism in mod~$p$ homology (with Bocksteins acting trivially on the image).

Define $\delta_k \colon P^{2np^k-2}(p^{r+1}) \to \Omega^2P^{2n+1}(p^r)$ to be the adjoint of the composite above. Let $\tau^\lambda_k(v)$ denote the iterated Browder bracket
\[ \tau^\lambda_k(v) = \mathrm{ad}_\lambda^{p^k-1}(v)(u) \in H_{2np^k-2}(\Omega^2P^{2n+1}(p^r)) \]
which is the transgression of the iterated commutator
\[ \tau_k(v)=\mathrm{ad}^{p^k-1}(v)(u) \in H_{2np^k-1}(\Omega P^{2n+1}(p^r)). \]
It follows that $(\delta_k)_\ast(v_{2np^k-2}) = \tau^\lambda_k(v)$. Similarly, $(\delta_k)_\ast(u_{2np^k-3}) = \ell\sigma^\lambda_k(v) \neq 0$ where $\sigma^\lambda_k(v)$ denotes the transgression of $\sigma_k(v) \in H_{2np^k-2}(\Omega P^{2n+1}(p^r))$.

We now have a map $\delta_k\colon P^{2np^k-2}(p^{r+1}) \to \Omega^2P^{2n+1}(p^r)$ inducing
\[
\xymatrix @R=1.2pc @C=5pc{
\quad v_{2np^k-2} \ar@/_1pc/[d]_-{\beta^{(r+1)}} \ar@{|->}[r] & \tau^\lambda_k(v) \qquad \\
\quad u_{2np^k-3} \ar@{|->}[r] & \ell\sigma^\lambda_k(v) \qquad
}
\]
in homology and it remains to show that $\beta^{(r+1)}\tau^\lambda_k(v) \neq 0$. It suffices by naturality of $\beta^{(r+1)}$ to show that $\sigma^\lambda_k(v)$ does not represent zero in $E_H^{r+1}(\Omega^2P^{2n+1}(p^r))=H_\ast(E_H^r(\Omega^2P^{2n+1}(p^r)), \beta^{(r)})$. First note that $\tau^\lambda_k(v)$ and $\sigma^\lambda_k(v)$ are $\beta^{(s)}$-cycles for $s\le r$ since $v_{2np^k-2}$ and $u_{2np^k-3}$ are. To see that they are not $\beta^{(s)}$-boundaries for any $s\le r$, consider the Snaith splitting of $\Omega^2P^{2n+1}(p^r)$. Since all Bocksteins must respect the induced splitting in homology and $\tau^\lambda_k(v)$, $\sigma^\lambda_k(v)$ lie in the homology of the stable summand $D_{p^k}(\Omega^2P^{2n+1}(p^r))$, it follows from Lemma~\ref{top homology} that for degree reasons the only class $x$ which could potentially satisfy $\beta^{(s)}x=\sigma^\lambda_k(v)$ is a linear combination of $\beta^{(1)}Q_1^kv$ and $\tau^\lambda_k(v)$. But $\beta^{(s)}\beta^{(1)}Q_1^kv=\beta^{(s)}\tau^\lambda_k(v)=0$ for all $s\le r$. Therefore $\tau^\lambda_k(v)$, $\sigma^\lambda_k(v)$ represent nontrivial classes in $E_H^{r+1}(\Omega^2P^{2n+1}(p^r))$, where the differential $\beta^{(r+1)}\tau^\lambda_k(v) = \ell\sigma^\lambda_k(v)$ is forced.
\end{proof}

As a partial $2$-primary analogue of Lemma~\ref{odd higher Bockstein}, we show that the class $\mathrm{ad}_\lambda^{2^k-1}(v)(u)$ supports a higher Bockstein when $k=1$.

\begin{lem} \label{even higher Bockstein} 
Let $p=2$ and $r\ge 1$. Then in the mod $2$ homology Bockstein spectral sequence of $\Omega^2P^{2n+1}(2^r)$, we have
\[ \beta^{(r+1)}\lambda(u,v) = Q_1u \]
in $E_H^{r+1}(\Omega^2P^{2n+1}(2^r))$.
\end{lem}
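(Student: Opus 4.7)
The plan is to follow the pattern of Lemma \ref{odd higher Bockstein}: first confirm that $\lambda(u,v)$ and $Q_1 u$ both survive to $E_H^{r+1}$, and then exhibit an unstable lift that forces the Bockstein between them.

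For the survival step, I would work inside the Snaith summand $D_2(\Omega^2 P^{2n+1}(2^r))$, whose mod $2$ homology and Steenrod action are laid out just before and in Lemma \ref{Steenrod squares}. Bocksteins respect the weight grading coming from the Snaith splitting, so any $\beta^{(s)}$-preimage of $\lambda(u,v)$ must be a weight-$2$ class in degree $4n-1$, and in $D_2$ the only such class is $Q_1 v$. The derivation property of Bocksteins on Browder brackets (compare the Cartan formula appearing in the proof of Lemma \ref{Steenrod squares}), combined with $\beta^{(r)} v = u$ and $\lambda(u,u) = 0$, shows that $\lambda(u,v)$ is a $\beta^{(s)}$-cycle for all $s \le r$. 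Lemma \ref{Steenrod squares}(a) then rules out $\lambda(u,v)$ being a $\beta^{(1)}$-boundary when $r > 1$; for $r = 1$ the relation $\beta^{(1)} Q_1 v = v^2 + \lambda(u,v)$ only identifies $\lambda(u,v)$ with $v^2$ on $E_H^2$, so in either case $\lambda(u,v)$ survives to $E_H^{r+1}$. A parallel argument, using that the only classes in $H_{4n-2}(D_2)$ are $v^2$ and $\lambda(u,v)$ (both $\beta^{(s)}$-cycles for $s \le r$), shows that $Q_1 u$ also survives, and in particular that the subspace of $E_H^{r+1}$ in degree $4n-3$ potentially hit by $\beta^{(r+1)}\lambda(u,v)$ is the one-dimensional span of $Q_1 u$.

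Next, I would construct an unstable map $\delta \colon P^{4n-2}(2^{r+1}) \to \Omega^2 P^{2n+1}(2^r)$ with Hurewicz image $\lambda(u,v)$, analogous to the $\delta_k$ of Lemma \ref{odd higher Bockstein}. The natural candidate is the double adjoint of the mod $2$ Samelson product $[\mu,\nu] \in \pi_{4n-1}(\Omega P^{2n+1}(2^r);\mathbb{Z}/2)$, lifted to a mod $2^{r+1}$ class using a $2$-primary analogue of Theorem \ref{Bockstein in fibre} applied to the fibre $F^{2n+1}(2^r)$ of the pinch map. Naturality of $\beta^{(r+1)}$ then transports the relation $\beta^{(r+1)} v_{4n-2} = u_{4n-3}$ on $P^{4n-2}(2^{r+1})$ to a nonzero relation in $E_H^{r+1}(\Omega^2 P^{2n+1}(2^r))$, and the survival step forces $\beta^{(r+1)} \lambda(u,v)$ to equal $Q_1 u$. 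The main obstacle I expect is producing this mod $2^{r+1}$ lift: the $2$-primary detection analogous to the odd-primary Cohen--Moore--Neisendorfer fibre computation is more delicate, and additional $2$-primary techniques (for instance along the lines of \cite{CW}) will likely be needed.
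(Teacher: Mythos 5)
Your survival analysis for $\lambda(u,v)$ and $Q_1u$ inside $D_2(\Omega^2P^{2n+1}(2^r))$ is fine and parallels what is needed, but the core of your argument --- producing the nonzero differential by constructing an unstable map $\delta\colon P^{4n-2}(2^{r+1})\to \Omega^2P^{2n+1}(2^r)$ with Hurewicz image $\lambda(u,v)$ --- cannot work in the stated generality. First, the input you would need does not exist at $p=2$: Theorem~\ref{Bockstein in fibre} is an odd-primary result, and the whole Cohen--Moore--Neisendorfer framework behind it (the splitting $P^n(p^r)\wedge P^m(p^r)\simeq P^{n+m}(p^r)\vee P^{n+m-1}(p^r)$, hence mod $p^r$ Samelson products and the differential graded Lie algebra structure on $\pi_\ast(\Omega X;\mathbb{Z}/p^r)$) requires $p^r>2$ and in fact $p\ge 3$; there is no $2$-primary analogue of $\tau'_k$, $\sigma'_k$ to lift. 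Second, and more decisively, the map you want is obstructed for general $n$ by the paper's own Proposition~\ref{spherical classes} (and Proposition~\ref{Wu's prop} when $r=1$): for your naturality argument to give a \emph{nonzero} class $\beta^{(r+1)}\lambda(u,v)=\delta_\ast(u_{4n-3})$, the restriction of $\delta$ to the bottom cell would have to have Hurewicz image the unique surviving primitive $Q_1u$ in degree $4n-3$, i.e.\ $Q_1u$ would have to be spherical --- which happens only for $n\in\{1,2,4\}$ (equivalently, only when the Whitehead square $w_{2n-1}$ is appropriately divisible). Since the lemma is asserted for all $n>1$, any proof routed through such an unstable map is doomed; this is precisely why the paper treats the cases $n=2,4$ separately later (Theorem~\ref{even factorization thm}) and does \emph{not} prove the present lemma this way.

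The paper's actual proof sidesteps geometry entirely: it is a two-line equivariant chain-level computation in the style of \cite[III.3.10]{CLM} and \cite{M}. Representing $\lambda(u,v)$ by $\theta_\ast((\alpha+1)e_1\otimes a\otimes b)$ with $d(a)=2^rb$, one computes
\[ d((\alpha+1)e_1\otimes a\otimes b)=(\alpha^2-1)e_0\otimes a\otimes b-2^r(\alpha+1)e_1\otimes b\otimes b=-2^{r+1}e_1\otimes b\otimes b, \]
and $e_1\otimes b\otimes b$ represents $Q_1u$. This single identity simultaneously shows $\beta^{(s)}\lambda(u,v)=0$ for $s\le r$ and $\beta^{(r+1)}\lambda(u,v)=Q_1u$, with no sphericality or Samelson-product input required. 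If you want to salvage your outline, replace your step two with a chain-level (or Nishida-relation/Dyer--Lashof-algebra) computation of this kind; the unstable lifting strategy should be reserved for the exceptional dimensions where Proposition~\ref{spherical classes} permits it.
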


\begin{proof}
We give a direct chain level calculation similar to the proof of \cite[III.3.10]{CLM}. Consider the $\Sigma_2$-invariant map $\theta\colon\mathcal{C}_2(2)\times X\times X \to X$ given by the action of the little $2$-cubes operad on $X=\Omega^2P^{2n+1}(2^r)$. Let $e_k$ and $\alpha$ be as defined in \cite[Section 6]{M} and let $a$, $b$ be chains representing $v$, $u$, respectively, with $d(a)=2^rb$. Then
\begin{align*}
d((\alpha +1)e_1\otimes a\otimes b) &= (\alpha^2-1)e_0\otimes a\otimes b - 2^r(\alpha+1)e_1\otimes b\otimes b \\
&= - 2^{r+1}e_1\otimes b\otimes b.
\end{align*}
Since $\theta_\ast$ commutes with $d$, it follows from the definitions of $\lambda$ and $Q_1$ (cf.\ \cite{M, CLM}) that $\beta^{(s)}\lambda(u,v)=0$ for $s\le r$ and $\beta^{(r+1)}\lambda(u,v)=Q_1u$. 
\end{proof}

Let $i\colon S^{n-1} \to P^n(p^r)$ denote the inclusion of the bottom cell and $\eta\colon S^n \to S^{n-1}$ the Hopf map.

\begin{lem} \label{homotopy groups}
Let $n\ge 4$ and $r\ge 1$. Then
\begin{enumerate}
\item $\pi_{n-1}(P^n(p^r)) = \mathbb{Z}/p^r \left\langle i \right\rangle$, \smallskip
\item $\pi_n(P^n(p^r)) = \begin{cases} \mathbb{Z}/2 \left\langle i\eta \right\rangle & \text{if } p=2 \\ 0 & \text{if } p \text{ is odd.} \end{cases}$
\end{enumerate}
\end{lem}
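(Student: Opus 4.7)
My plan is to exploit the CW structure $P^n(p^r)=S^{n-1}\cup_{p^r}e^n$ and analyze the long exact sequence of the pair $(P^n(p^r),S^{n-1})$, using homotopy excision to identify the relative groups with homotopy groups of $S^n$. Part (a) then falls out immediately from Hurewicz: for $n\ge 4$ the space $P^n(p^r)$ is $(n-2)$-connected with $H_{n-1}(P^n(p^r))=\mathbb{Z}/p^r$, so $\pi_{n-1}(P^n(p^r))\cong\mathbb{Z}/p^r$, and the bottom cell inclusion $i$ is a generator since it induces the canonical surjection $\mathbb{Z}\to\mathbb{Z}/p^r$ on $H_{n-1}$.

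For part (b), I would consider the fragment
\[
\pi_{n+1}(P^n(p^r),S^{n-1}) \xrightarrow{\partial} \pi_n(S^{n-1}) \xrightarrow{i_*} \pi_n(P^n(p^r)) \xrightarrow{j_*} \pi_n(P^n(p^r),S^{n-1}) \xrightarrow{\partial} \pi_{n-1}(S^{n-1})
\]
of the pair's long exact sequence. Since $S^{n-1}$ is $(n-2)$-connected and $(P^n(p^r),S^{n-1})$ is $(n-1)$-connected, Blakers--Massey identifies $\pi_k(P^n(p^r),S^{n-1})\cong\pi_k(S^n)$ for $k\le 2n-3$; the hypothesis $n\ge 4$ is precisely what includes both $k=n$ and $k=n+1$ in this range, giving relative groups $\mathbb{Z}$ and $\mathbb{Z}/2\langle\eta\rangle$ respectively. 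The characteristic map $\Phi\colon(D^n,S^{n-1})\to(P^n(p^r),S^{n-1})$ restricts to the attaching map $p^r\iota_{n-1}$ on $S^{n-1}$ and induces the identity on the common quotient $S^n$, so $\Phi_*$ realizes the Blakers--Massey isomorphism on relative groups. Naturality of $\partial$ together with the fact that $\partial$ is an isomorphism for the contractible pair $(D^n,S^{n-1})$ then identifies both connecting maps with multiplication by $p^r$: the right $\partial$ is $p^r\colon\mathbb{Z}\to\mathbb{Z}$, and the left $\partial$ sends the generator to $p^r\eta\in\pi_n(S^{n-1})=\mathbb{Z}/2\langle\eta\rangle$ (the composition $(p^r\iota_{n-1})\circ\eta=p^r\eta$ is legitimate because $\eta$ is a suspension for $n\ge 4$, making composition bilinear).

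The bookkeeping is then immediate. The right $\partial$ is injective, so $j_*=0$ and $\pi_n(P^n(p^r))=\operatorname{im}(i_*)$. If $p$ is odd, $p^r$ is a unit mod $2$ and the left $\partial$ is an isomorphism, forcing $i_*=0$ and $\pi_n(P^n(p^r))=0$. If $p=2$, $p^r$ is zero mod $2$, the left $\partial$ vanishes, $i_*$ is injective, and $\pi_n(P^n(2^r))=\mathbb{Z}/2\langle i\eta\rangle$. The main obstacle, if any, is the identification of the boundary maps with multiplication by the attaching degree; everything else is straightforward exact sequence chasing.
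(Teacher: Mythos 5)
Your argument is correct and is essentially the paper's proof: the authors likewise use Blakers--Massey to make the cofibration sequence $S^{n-1}\xrightarrow{p^r}S^{n-1}\to P^n(p^r)\to S^n\xrightarrow{p^r}S^n$ exact on $\pi_j$ for $j=n-1,n$, and the key point in both arguments is that the degree $p^r$ map acts as multiplication by $p^r$ on $\pi_n(S^{n-1})=\mathbb{Z}/2\langle\eta\rangle$ because $\eta$ is a suspension for $n\ge 4$. Your extra detail on identifying the boundary maps via the characteristic map, and the Hurewicz shortcut for part (a), are fine but do not change the substance.
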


\begin{proof}
Both parts follow immediately from the sequence
\[ \pi_j(S^{n-1}) \longrightarrow \pi_j(S^{n-1}) \longrightarrow \pi_j(P^n(p^r)) \longrightarrow \pi_j(S^n) \longrightarrow \pi_j(S^n) \]
induced by the cofibration defining $P^n(p^r)$, which is exact for $j=n-1$, $n$ by the Blakers--Massey theorem. Note that the degree $p^r$ map on $S^{n-1}$ induces multiplication by $p^r$ on $\pi_n(S^{n-1})=\mathbb{Z}/2 \left\langle \eta \right\rangle$ since $\eta$ is a suspension for $n\ge 4$ (whereas $S^2\xrightarrow{2}S^2$ induces multiplication by $4$ on $\pi_3(S^2)$, implying $\pi_3(P^3(2))=\mathbb{Z}/4$, e.g.).
\end{proof}

We are now ready to prove the splittings of Theorem~\ref{splitting thm}, parts (a) and (b) of which are restated below as Lemma~\ref{odd lemma} and Lemma~\ref{even lemma}, respectively.

\begin{lem} \label{odd lemma}
If $p$ is an odd prime and $r \ge 1$, then $D_{p^k}(\Omega^2P^{2n+1}(p^r))$ is stably homotopy equivalent to
\[ P^{2np^k-2}(p^{r+1}) \vee  X_{p^k} \]
for some finite CW-complex $X_{p^k}$ for all $k\ge 1$.
\end{lem}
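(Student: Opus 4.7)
The plan is to realize the $P^{2np^k-2}(p^{r+1})$ summand via the map $\delta_k$ from Lemma \ref{odd higher Bockstein} and then produce a stable retraction onto it. First, stabilize $\delta_k$ and compose with the projection onto the $p^k$-th wedge summand of the Snaith splitting to obtain
$$\bar{\delta}_k \colon \Sigma^\infty P^{2np^k-2}(p^{r+1}) \longrightarrow D_{p^k}(\Omega^2 P^{2n+1}(p^r)).$$
The two mod $p$ homology classes hit by $\delta_k$, namely $\mathrm{ad}_\lambda^{p^k-1}(v)(u)$ and $\sigma^\lambda_k(v)$, both have weight exactly $p^k$ in the Cohen--Lada--May basis of Theorem \ref{CLM thm}, so the Snaith projection restricts to the identity on the image of $(\delta_k)_*$. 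Hence $\bar{\delta}_k$ still induces a split monomorphism on integral homology, picking out a $\mathbb{Z}/p^{r+1}$ direct summand of $H_{2np^k-2}(D_{p^k}(\Omega^2 P^{2n+1}(p^r)); \mathbb{Z})$.

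Next, to construct the stable retraction I would exploit the tight cellular structure of $D = D_{p^k}(\Omega^2 P^{2n+1}(p^r))$. By Lemma \ref{top homology}, $D$ is $(2np^k-2p^k-1)$-connected and $(2np^k-1)$-dimensional, with mod $p$ homology in the top two dimensions spanned by $\{Q_1^k v\}$ and $\{\beta^{(1)}Q_1^k v,\; \mathrm{ad}_\lambda^{p^k-1}(v)(u)\}$ respectively. Interpreting this integrally, together with the higher Bockstein from Lemma \ref{odd higher Bockstein}, the top portion of $D$ assembles into two independent torsion extensions: a $\mathbb{Z}/p$ summand in degree $2np^k-1$ (giving a $P^{2np^k-1}(p)$-subcomplex on the cells carrying $Q_1^k v$ and $\beta^{(1)}Q_1^k v$) and a $\mathbb{Z}/p^{r+1}$ summand in degree $2np^k-2$ (giving, as the image of $\bar{\delta}_k$, a $P^{2np^k-2}(p^{r+1})$-subcomplex on the cells carrying $\mathrm{ad}_\lambda^{p^k-1}(v)(u)$ and $\sigma^\lambda_k(v)$).

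I then define $r_k \colon D \to P^{2np^k-2}(p^{r+1})$ by first collapsing the top cell $Q_1^k v$ together with all cells of $D$ in dimensions below $2np^k-3$, yielding a $4$-cell subquotient, and then projecting off the $P^{2np^k-1}(p)$ factor. The composition $r_k \circ \bar{\delta}_k$ induces the identity on both mod $p$ homology classes of $P^{2np^k-2}(p^{r+1})$ by construction, hence is a stable homotopy equivalence by Whitehead's theorem. Setting $X_{p^k}$ to be the cofiber of $\bar{\delta}_k$ (equivalently the fiber of $r_k$) then yields the claimed stable splitting.

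The main obstacle is justifying this collapse step, that is, verifying that both the $P^{2np^k-1}(p)$-subcomplex at the top and the cells in dimensions $\le 2np^k-4$ split off stably from $D$ without interfering with the $P^{2np^k-2}(p^{r+1})$-summand. This is a stable cohomotopical obstruction problem in the narrow $2p^k$-dimensional range occupied by $D$. My plan is to resolve it by a Bockstein/connectivity argument: any nontrivial interaction between the two top Moore subcomplexes would force an unwanted higher Bockstein relation involving $\beta^{(1)}Q_1^k v$, contradicting the explicit dimensional data of Lemma \ref{top homology}. In the special case $r > 1$, Neisendorfer's product decomposition of $\Omega^2 P^{2n+1}(p^r)$ (noted in the remark after Theorem \ref{splitting thm}) bypasses the obstruction entirely, exhibiting $\Omega S^{2np^k-1}\{p^{r+1}\}$ as a factor from which $P^{2np^k-2}(p^{r+1})$ is a known stable retract.
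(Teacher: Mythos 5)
Your first half coincides with the paper's argument: stabilize $\delta_k$ from Lemma~\ref{odd higher Bockstein}, project onto the weight-$p^k$ Snaith summand, and reduce the lemma to producing a stable retraction $D_{p^k}(\Omega^2P^{2n+1}(p^r)) \to P^{2np^k-2}(p^{r+1})$ hitting $\mathrm{ad}_\lambda^{p^k-1}(v)(u)$. The gap is in the construction of that retraction. A first, technical point: ``collapsing the top cell'' is not a well-defined operation yielding a map \emph{out of} $D_{p^k}$ (the closure of the top cell is not a subcomplex). The paper instead keeps the top cell, collapses the $(2np^k-4)$-skeleton together with all but one of the $(2np^k-3)$-cells (after first arranging, by a self-equivalence, that the surviving bottom cell carries $\beta^{(r+1)}\mathrm{ad}_\lambda^{p^k-1}(v)(u)$), and obtains a quotient map onto a $4$-cell complex $C = P^{2np^k-2}(p^{r+1}) \cup_\alpha e^{2np^k-2}\cup_\gamma e^{2np^k-1}$, which must then be shown to split as $P^{2np^k-2}(p^{r+1})\vee P^{2np^k-1}(p)$.

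More seriously, the mechanism you propose for resolving the splitting obstruction --- that ``any nontrivial interaction between the two top Moore subcomplexes would force an unwanted higher Bockstein relation involving $\beta^{(1)}Q_1^kv$'' --- does not address the actual obstruction. A Bockstein argument does kill the attaching map $\alpha$ (every nonzero element of $\pi_{2np^k-3}(P^{2np^k-2}(p^{r+1}))\cong\mathbb{Z}/p^{r+1}$ is detected by a Bockstein, while $\beta^{(1)}Q_1^kv$ is a permanent cycle). But the component of the top attaching map $\gamma$ landing in $\pi_{2np^k-2}(P^{2np^k-2}(p^{r+1}))$ is invisible to Bocksteins and to the dimensional data of Lemma~\ref{top homology}: the element that could live there is $i\eta$, which is detected by $Sq^2_\ast$, not by any $\beta^{(s)}$, and which does not change the homology at all as a graded group. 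The paper disposes of it by Lemma~\ref{homotopy groups}(b): $\pi_{2np^k-2}(P^{2np^k-2}(p^{r+1}))=0$ because $p$ is odd. This vanishing is precisely where odd primality enters and why the analogous splitting fails at $p=2$, $r=1$ (Theorem~\ref{splitting thm}(c)); a proof that never invokes it cannot be complete. Your fallback through Neisendorfer's product decomposition is valid but only covers $r>1$, leaving exactly the $r=1$ case for which the argument above is needed.
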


\begin{proof}
Suppose $p$ is an odd prime, $r\ge 1$ and let $k\ge 1$. By Lemma~\ref{odd higher Bockstein}, the map \[ \delta_k \colon P^{2np^k-2}(p^{r+1}) \longrightarrow \Omega^2P^{2n+1}(p^r) \] induces a monomorphism in mod $p$ homology with
\[ (\delta_k)_\ast(v_{2np^k-2}) = \mathrm{ad}_\lambda^{p^k-1}(v)(u), \quad (\delta_k)_\ast(u_{2np^k-3}) = \beta^{(r+1)}\mathrm{ad}_\lambda^{p^k-1}(v)(u). \]
Since these elements have weight $p^k$ in $H_\ast(\Omega^2P^{2n+1}(p^r))$, by stabilizing $\delta_k$ and composing with the Snaith splitting we obtain a stable map $P^{2np^k-2}(p^{r+1}) \to D_{p^k}(\Omega^2P^{2n+1}(p^r))$ with the same image in homology. 

It therefore suffices to produce a map $f_k \colon D_{p^k}(\Omega^2P^{2n+1}(p^r)) \to P^{2np^k-2}(p^{r+1})$ with 
\begin{equation} \label{f_k}
(f_k)_\ast(\mathrm{ad}_\lambda^{p^k-1}(v)(u)) = v_{2np^k-2}.
\end{equation} 
By collapsing the $(2np^k-4)$-skeleton of $D_{p^k}(\Omega^2P^{2n+1}(p^r))$ to a point, we are left with a complex with cells only in dimensions $2np^k-3$, $2np^k-2$ and $2np^k-1$ (by Lemma~\ref{top homology}) of the form
\begin{equation} \label{cell decomp}
\left(\bigvee_{i=1}^d S^{2np^k-3}\right) \cup e^{2np^k-2} \cup e^{2np^k-2} \cup e^{2np^k-1},
\end{equation}
where $d=\dim H_{2np^k-3}(D_{p^k}(\Omega^2P^{2n+1}(p^r)))$ and the top three cells carry the homology classes $Q_1^kv$, $\beta^{(1)}Q_1^kv$ and $\mathrm{ad}_\lambda^{p^k-1}(v)(u)$. Since $\mathrm{ad}_\lambda^{p^k-1}(v)(u) \in H_{2np^k-2}(D_{p^k}(\Omega^2P^{2n+1}(p^r)))$ supports a nontrivial $(r+1)^\text{st}$ Bockstein by Lemma~\ref{odd higher Bockstein}, we may assume (altering by a self-homotopy equivalence if necessary) that the inclusion of one of the bottom cells in \eqref{cell decomp} has Hurewicz image $\beta^{(r+1)}\mathrm{ad}_\lambda^{p^k-1}(v)(u)$. Then by further collapsing a wedge $\vee_{i=1}^{d-1}S^{2np^k-3}$ of the other bottom cells to a point, we obtain a $4$-cell complex $C$ with mod $p$ homology Bockstein spectral sequence given by
\[
\xymatrix @R=1.2pc @C=1pc{
2np^k-1\qquad & Q_1^kv \ar@/_1pc/[d]_-{\beta^{(1)}} & \\
2np^k-2\qquad & \beta^{(1)}Q_1^kv & \mathrm{ad}_\lambda^{p^k-1}(v)(u) \qquad\quad \ar@/^1pc/[d]^-{\beta^{(r+1)}} \\
2np^k-3\qquad & & \beta^{(r+1)}\mathrm{ad}_\lambda^{p^k-1}(v)(u) \qquad\quad
}
\]
and a map $D_{p^k}(\Omega^2P^{2n+1}(p^r)) \to C$ inducing an epimorphism in homology.

It follows from the description of $H_\ast(C)$ above that $C \simeq P^{2np^k-2}(p^{r+1}) \cup_\alpha e^{2np^k-2} \cup_\gamma e^{2np^k-1}$ for some attaching maps $\alpha$, $\gamma$. Since $\beta^{(1)}Q_1^kv \in H_{2np^k-2}(C)$ is a permanent cycle in the Bockstein spectral sequence and every nonzero element $\alpha \in \pi_{2np^k-3}(P^{2np^k-2}(p^{r+1})) = \mathbb{Z}/p^{r+1}$ is detected by a Bockstein, we conclude that $\alpha$ is trivial. Next we consider 
\[ \gamma \in \pi_{2np^k-2}(P^{2np^k-2}(p^{r+1}) \vee S^{2np^k-2}) = \pi_{2np^k-2}(P^{2np^k-2}(p^{r+1})) \oplus \pi_{2np^k-2}(S^{2np^k-2}). \]
By Lemma \ref{homotopy groups}, $\pi_{2np^k-2}(P^{2np^k-2}(p^{r+1}))=0$ since $p$ is odd, and since the top homology class $Q_1^kv \in H_{2np^k-1}(C)$ supports a nontrivial first Bockstein differential, it follows that the second component of $\gamma$ is of degree $\pm p$. Therefore $C \simeq P^{2np^k-2}(p^{r+1}) \vee P^{2np^k-1}(p)$. Finally, using this splitting we define the map $f_k$ by the composite
\[ f_k\colon D_{p^k}(\Omega^2P^{2n+1}(p^r)) \longrightarrow C \simeq P^{2np^k-2}(p^{r+1}) \vee P^{2np^k-1}(p) \stackrel{\pi_1}{\longrightarrow} P^{2np^k-2}(p^{r+1}), \]
where the first map is the quotient map described in the previous paragraph and $\pi_1$ is the projection onto the first wedge summand. By construction, $f_k$ satisfies \eqref{f_k} so the assertion follows. 
\end{proof}

\begin{lem} \label{even lemma}
If $r>1$, then there is a homotopy equivalence
\[ D_2(\Omega^2P^{2n+1}(2^r)) \simeq P^{4n-2}(2^{r+1}) \vee  X_2 \] 
for some $4$-cell complex $X_2=P^{4n-3}(2^r) \cup CP^{4n-2}(2)$.
\end{lem}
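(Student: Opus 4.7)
The plan parallels the strategy of Lemma \ref{odd lemma}, adapted to the prime $p=2$. First I would pin down the integral cell structure of $D := D_2(\Omega^2 P^{2n+1}(2^r))$ using the three higher Bocksteins available when $r > 1$: Lemma \ref{even higher Bockstein} supplies $\beta^{(r+1)} \lambda(u,v) = Q_1 u$; Lemma \ref{Steenrod squares}(a) (viewed as $Sq^1_* = \beta^{(1)}$) supplies $\beta^{(1)} Q_1 v = v^2$; and the Leibniz rule for $\beta^{(r)}$ on the Pontryagin ring $H_*(\Omega^2 P^{2n+1}(2^r); \Z/2)$ applied to $\beta^{(r)} v = u$ gives $\beta^{(r)}(uv) = u^2$. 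Since these three Bockstein pairs are separated when $r > 1$, the integral homology of $D$ is $\Z/2^r$ in degree $4n-4$, $\Z/2^{r+1}$ in degree $4n-3$, and $\Z/2$ in degree $4n-2$.

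Next I would realize the middle pair $(\lambda(u,v), Q_1 u)$ by a map $\delta \colon P^{4n-2}(2^{r+1}) \to D$, in analogy with the construction of $\delta_k$ in Lemma \ref{odd higher Bockstein}. The adjoint $j\colon P^{2n-1}(2^r) \to \Omega^2 P^{2n+1}(2^r)$ of the identity, together with the Browder bracket operation coming from the $\mathcal{C}_2(2)$-action used in the proof of Lemma \ref{even higher Bockstein}, produces a map from $\Sigma(P^{2n-1}(2^r) \wedge P^{2n-1}(2^r))$ into $\Omega^2 P^{2n+1}(2^r)$ whose Hurewicz image contains $\lambda(u,v)$. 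Since $r > 1$, the smash splitting $P^{2n-1}(2^r) \wedge P^{2n-1}(2^r) \simeq P^{4n-2}(2^r) \vee P^{4n-3}(2^r)$ lets us restrict to a single Moore-space summand, yielding a map from $P^{4n-2}(2^r)$. To lift from mod $2^r$ to mod $2^{r+1}$ I would invoke Lemma \ref{even higher Bockstein}: the candidate bottom-cell map hits $Q_1 u$ in mod-$2$ homology, a class that carries no $\beta^{(s)}$ for $s \leq r$ and hence represents an element of order exactly $2^{r+1}$ in integral homology, which after composing with the stable projection onto $D$ allows the attaching degree of the top cell to be raised from $2^r$ to $2^{r+1}$.

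With $\delta$ in hand, I would construct a retraction as in the end of Lemma \ref{odd lemma}. Collapsing the $4$-cell subcomplex of $D$ carrying $\{u^2, uv, v^2, Q_1 v\}$ yields a $2$-cell integral Moore space $P^{4n-2}(2^{r+1})$, since the only surviving Bockstein pair is $(\lambda(u,v), Q_1 u)$; composing this quotient with $\delta$ gives a self-map of $P^{4n-2}(2^{r+1})$ that is the identity on integral homology and hence a homotopy equivalence. This provides the splitting $D \simeq P^{4n-2}(2^{r+1}) \vee X_2$, and the complement $X_2$ is the $4$-cell quotient carrying $u^2, uv, v^2, Q_1 v$: its Bockstein pairs $(uv, u^2)$ and $(Q_1 v, v^2)$ identify $P^{4n-3}(2^r)$ and $P^{4n-2}(2)$ subcomplexes, and the remaining attaching data (controlled by the Steenrod operations recorded in Lemma \ref{Steenrod squares}) displays $X_2$ as the mapping cone $P^{4n-3}(2^r) \cup CP^{4n-2}(2)$.

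The principal obstacle is the construction of $\delta$. At odd primes the analogous map in Lemma \ref{odd higher Bockstein} was built from the Cohen--Moore--Neisendorfer analysis of $\Omega F^{2n+1}(p^r)$, which has no clean $p=2$ analog, so the lift from mod $2^r$ to mod $2^{r+1}$ is the delicate step and likely requires a direct obstruction-theoretic argument via the mod-$2^r$ homotopy Bockstein spectral sequence of $\Omega F^{2n+1}(2^r)$ or a comparison of the Browder bracket with the Samelson product of the adjoints $\mu$ and $\nu$.
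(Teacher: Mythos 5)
Your reading of the homology (the three Bockstein pairs $(Q_1v,v^2)$, $(\lambda(u,v),Q_1u)$, $(uv,u^2)$ and the resulting integral groups) agrees with the paper, but your overall strategy is different from the paper's and has a genuine gap at its central step. The paper never constructs a map \emph{into} $D_2(\Omega^2P^{2n+1}(2^r))$. Instead it builds the complex up cell by cell, $P^{4n-3}(2^r)\cup_\alpha e^{4n-3}\cup_\gamma e^{4n-2}\cup_\delta e^{4n-2}\cup_\epsilon e^{4n-1}$, and shows each attaching map is forced: Bockstein considerations kill $\alpha$ and pin the degrees, and the $\mathbb{Z}/2\langle i\eta\rangle$ components of $\gamma$, $\delta$, $\epsilon$ (from Lemma~\ref{homotopy groups}) are killed because $i\eta$ is detected by $Sq^2_\ast$, which acts trivially when $r>1$ by Lemma~\ref{Steenrod squares}. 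The splitting then falls out of the identified cell structure. Note that your route makes no use of Lemma~\ref{Steenrod squares}, which is precisely the input that distinguishes $r>1$ from $r=1$ (where $D_2$ is indecomposable); any correct argument must use it somewhere.

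The gap in your plan is the construction of $\delta\colon P^{4n-2}(2^{r+1})\to D$, and the difficulty is worse than ``delicate.'' First, the Browder-bracket map out of the $P^{4n-2}(2^r)$ summand of $P^{2n-1}(2^r)\wedge P^{2n-1}(2^r)$ restricts on the bottom cell to a class killed by $2^r$ (since $S^{4n-3}\xrightarrow{2^r}S^{4n-3}\to P^{4n-2}(2^r)$ is null), so its integral Hurewicz image lies in $2\cdot H_{4n-3}(D;\mathbb{Z})$ and can never be made to generate $H_{4n-3}(D;\mathbb{Z})\cong\mathbb{Z}/2^{r+1}$; ``raising the attaching degree from $2^r$ to $2^{r+1}$'' is not an operation available from this data. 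Second, producing any map $P^{4n-2}(2^{r+1})\to\Omega^2P^{2n+1}(2^r)$ carrying the pair $(\lambda(u,v),Q_1u)$ requires $Q_1u$ to be spherical, and the paper's Proposition~\ref{spherical classes} shows this fails unstably for all $n\neq 1,2,4$; the stable version of sphericity in $D$ is essentially equivalent to the splitting you are trying to prove, so arguing that way is circular. (This is exactly why the paper proves the splitting first by attaching-map analysis and only afterwards, in Theorem~\ref{even factorization thm}, constructs the unstable inclusion in the two exceptional cases $n=2,4$.) Your suggested fallback via the homotopy Bockstein spectral sequence of $\Omega F^{2n+1}(2^r)$ is precisely the piece of the Cohen--Moore--Neisendorfer machinery that is not available at $p=2$, as you note yourself, so it cannot rescue the construction.
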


\begin{proof}
Let $r>1$ and note that the mod $2$ homology generators $u$, $v \in H_\ast(\Omega^2P^{2n+1}(2^r))$ in respective degrees $2n-2$, $2n-1$ give a basis for the homology of the first stable summand $D_1(\Omega^2P^{2n+1}(2^r)) = P^{2n-1}(2^r)$ of $\Omega^2P^{2n+1}(2^r)$. Next, a basis for the quadratic part of $H_\ast(\Omega^2P^{2n+1}(2^r))$ is given by the classes
\[
\xymatrix @R=1.2pc @C=2.2pc{
4n-1\qquad & Q_1v \ar@/_1pc/[d]_-{\beta^{(1)}} & \\
4n-2\qquad & v^2 & \lambda(v,u) \quad \ar@/^1pc/[d]^-{\beta^{(r+1)}} \\
4n-3\qquad & uv \ar@/_1pc/[d]_-{\beta^{(r)}} & Q_1u \\
4n-4\qquad & u^2 &
}
\]
with Bockstein differentials acting as indicated by Lemma~\ref{Steenrod squares}, Lemma~\ref{even higher Bockstein} and the fact that $\beta^{(r)}v=u$.
It follows that $D_2(\Omega^2P^{2n+1}(2^r))$ has the homotopy type of a $6$-cell complex
\[ D_2(\Omega^2P^{2n+1}(2^r)) \simeq P^{4n-3}(2^r) \cup_\alpha e^{4n-3} \cup_\gamma e^{4n-2} \cup_\delta e^{4n-2} \cup_\epsilon e^{4n-1} \]
with homology as above, where the bottom Moore space carries the homology classes $uv$ and~$u^2$. As in the proof of Lemma \ref{odd lemma}, the attaching map $\alpha$ is null homotopic since every nonzero element of $\pi_{4n-4}(P^{4n-3}(2^r))=\mathbb{Z}/2^r$ is detected by a Bockstein and $Q_1u$ is a permanent cycle in the homology Bockstein spectral sequence. The next attaching maps $\gamma$ and $\delta$ may therefore be regarded as elements of $\pi_{4n-3}(P^{4n-3}(2^r)\vee S^{4n-3}) = \mathbb{Z}/2 \oplus \mathbb{Z}$, where the first summand is generated by $i\eta$ by Lemma \ref{homotopy groups}. Naturality and the morphism of cofibrations
\[
\xymatrix{
S^{4n-3} \ar[r]^\eta \ar@{=}[d] & S^{4n-4} \ar[r] \ar[d]^-i & \Sigma^{4n-6}\mathbb{C}P^2 \ar[d] \\
S^{4n-3} \ar[r]^-{i\eta} & P^{4n-3}(2^r) \ar[r] & C_{i\eta}
}
\]
imply that $i\eta$ is detected by $Sq^2_\ast$ since $\eta$ is. Since $Sq^2_\ast$ acts trivially on $H_\ast(D_2(\Omega^2P^{2n+1}(2^r)))$ when $r>1$ by Lemma~\ref{Steenrod squares}, the first components of $\gamma$ and $\delta$ must therefore be trivial. Without loss of generality, we may assume the second components of $\gamma$ and $\delta$ are trivial and degree~$\pm 2^{r+1}$, respectively, since we have a basis $\{v^2, \lambda(v,u)\}$ of $H_{4n-2}(D_2(\Omega^2P^{2n+1}(2^r)))$ where $v^2$ is a permanent cycle and $\lambda(v,u)$ supports a nontrivial $\beta^{(r+1)}$. 

We now have a homotopy equivalence
\[ D_2(\Omega^2P^{2n+1}(2^r)) \simeq (P^{4n-3}(2^r) \vee P^{4n-2}(2^{r+1}) \vee S^{4n-2}) \cup _\epsilon e^{4n-1} \]
where $v^2 \in H_{4n-2}(D_2(\Omega^2P^{2n+1}(2^r)))$ corresponds to the fundamental class of the $(4n-2)$-sphere on the right. Denote the components of the attaching map $\epsilon$ by
\[ \epsilon=(\epsilon_1,\epsilon_2,\epsilon_3) \in \pi_{4n-2}(P^{4n-3}(2^r)) \oplus \pi_{4n-2}(P^{4n-2}(2^{r+1})) \oplus \pi_{4n-2}(S^{4n-2}). \]
It suffices to show that $\epsilon_2$ and $\epsilon_3$ are trivial and degree $\pm 2$, respectively. Clearly the first Bockstein $\beta^{(1)}Q_1v =v^2$ on the top class of $D_2(\Omega^2P^{2n+1}(2^r))$ implies $\epsilon_3$ is of degree $\pm 2$. To see that $\epsilon_2$ is trivial, collapse the bottom Moore space $P^{4n-3}(2^r)$ to a point and repeat the argument above analyzing the attaching map $\gamma$.
\end{proof}


To conclude the proof of Theorem~\ref{splitting thm}, it remains to show that $D_2(\Omega^2P^{2n+1}(2))$ is a stably indecomposable $6$-cell complex. This follows immediately from homological considerations: by Lemma~\ref{Steenrod squares} and Lemma~\ref{even higher Bockstein}, $H_\ast(D_2(\Omega^2P^{2n+1}(2)))$ clearly does not admit any nontrivial decomposition respecting Steenrod and higher Bockstein operations. 


\section{Proof of Theorem~\ref{factorization thm}}

In this section we derive Theorem~\ref{factorization thm} from Theorem~\ref{splitting thm} and discuss some implications for the unstable homotopy groups of odd primary Moore spaces.

\begin{proof}[Proof of Theorem \ref{factorization thm}]
Suppose $p$ is an odd prime and $r\ge 1$. Then for each $k\ge 1$, the map $\delta_k\colon P^{2np^k-2}(p^{r+1}) \to \Omega^2P^{2n+1}(p^r)$ from Lemma~\ref{odd higher Bockstein} admits a stable retraction by Lemma~\ref{odd lemma}. Taking adjoints in the resulting homotopy commutative diagram
\[
\xymatrix{
\Sigma^\infty P^{2np^k-2}(p^{r+1}) \ar[r]^{\Sigma^\infty\delta_k} \ar@{=}[rd] & \Sigma^\infty\Omega^2P^{2n+1}(p^r) \ar[d] \\
& \Sigma^\infty P^{2np^k-2}(p^{r+1})
}
\]
yields the desired factorization of the unstable map $E^\infty\colon P^{2np^k-2}(p^{r+1}) \to QP^{2np^k-2}(p^{r+1})$ through $\Omega^2P^{2n+1}(p^r)$.

To similarly factor the stabilization map of a mod $p^{r+1}$ Moore space through $\Omega^2P^{2n}(p^r)$, we reduce to the odd-dimensional case using the fact that $\Omega^2P^{4n-1}(p^r)$ is an unstable retract of $\Omega^2P^{2n}(p^r)$ by the loop space decomposition~\eqref{even decomposition}. Explicitly, there is a map 
\[ P^{(4n-2)p^k-2}(p^{r+1}) \stackrel{\delta_k}{\longrightarrow} \Omega^2P^{4n-1}(p^r) \longrightarrow \Omega^2P^{2n}(p^r) \]
admitting a stable retraction, so the argument above implies that the stabilization map of $P^{(4n-2)p^k-2}(p^{r+1})$ factors through $\Omega^2P^{2n}(p^r)$.
\end{proof}

Note that if $p$ is prime and $\pi_j(P^{2np^k-2}(p^{r+1}))$ is in the stable range so that the map
$E^\infty\colon P^{2np^k-2}(p^{r+1}) \to QP^{2np^k-2}(p^{r+1})$
is an isomorphism on $\pi_j(\;)$, then Theorem~\ref{factorization thm} implies that $\pi_j(P^{2np^k-2}(p^{r+1}))$ is a summand of $\pi_{j+2}(P^{2n+1}(p^r))$. Since for any given $j\in \mathbb{Z}$ we have $\pi_j^s(P^{2np^k-2}(p^{r+1})) = \pi_j(P^{2np^k-2}(p^{r+1}))$ for $k$ sufficiently large, it follows that every stable homotopy group of a mod $p^{r+1}$ Moore space is a summand of $\pi_\ast(P^{2n+1}(p^r))$.

Rephrasing a little, we have the following consequence of Theorem~\ref{factorization thm}. Let $\mathbb{S}/p^{r}$ denote the mod $p^r$ Moore spectrum, that is, the cofibre of $\mathbb{S} \xrightarrow{p^r} \mathbb{S}$ where $\mathbb{S}$ is the sphere spectrum.

\begin{cor} \label{stable unstable}
Let $p$ be an odd prime and $r\ge 1$. Then for each $j\in \mathbb{Z}$, $\pi_j(\mathbb{S}/p^{r+1})$ is a summand of $\pi_{2np^k+j-1}(P^{2n+1}(p^r))$ for every sufficiently large $k$.
\end{cor}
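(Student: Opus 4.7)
My plan is to apply $\pi_\ell(-)$ to the homotopy commutative triangle supplied by Theorem~\ref{factorization thm} and then invoke Freudenthal's suspension theorem to convert the resulting unstable splitting into the asserted stable one. Fix $j\in\mathbb{Z}$, write $N=2np^k-2$, and take $\ell=N+j-1$. Theorem~\ref{factorization thm} produces a map $\delta_k\colon P^N(p^{r+1})\to\Omega^2P^{2n+1}(p^r)$ whose composition with the stabilization $\Omega^2P^{2n+1}(p^r)\to QP^N(p^{r+1})$ equals the unit $E^\infty$, so applying $\pi_\ell(-)$ factors the $\pi_\ell$-stabilization of $P^N(p^{r+1})$ through $\pi_\ell(\Omega^2P^{2n+1}(p^r))$.

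Next I would identify the three homotopy groups appearing after applying $\pi_\ell$. On one side, $\pi_\ell(\Omega^2P^{2n+1}(p^r))=\pi_{\ell+2}(P^{2n+1}(p^r))=\pi_{2np^k+j-1}(P^{2n+1}(p^r))$, which is the target group in the statement. On the other, the stable equivalence $\Sigma^\infty P^N(p^{r+1})\simeq\Sigma^{N-1}\mathbb{S}/p^{r+1}$ yields $\pi_\ell(QP^N(p^{r+1}))=\pi_\ell^s(P^N(p^{r+1}))=\pi_{\ell-N+1}(\mathbb{S}/p^{r+1})=\pi_j(\mathbb{S}/p^{r+1})$. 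Thus, as soon as the stabilization map $(E^\infty)_\ast\colon\pi_\ell(P^N(p^{r+1}))\to\pi_\ell^s(P^N(p^{r+1}))$ is an isomorphism, commutativity of the triangle forces $(\delta_k)_\ast$ at $\pi_\ell$ to exhibit $\pi_j(\mathbb{S}/p^{r+1})$ as a direct summand of $\pi_{2np^k+j-1}(P^{2n+1}(p^r))$.

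It remains to arrange that isomorphism, which is where Freudenthal enters: since $P^N(p^{r+1})$ is $(N-2)$-connected, $(E^\infty)_\ast$ is an isomorphism on $\pi_\ell$ whenever $\ell$ is in the stable range $\ell\le 2N-3$, a bound which rearranges to $j\le 2np^k-4$ and is therefore satisfied for all sufficiently large $k$ once $j$ has been fixed. The argument is essentially formal once Theorem~\ref{factorization thm} is in hand; I do not expect any substantive obstacle beyond correct index bookkeeping and noting that the stable range of $P^N(p^{r+1})$ grows without bound as $k\to\infty$.
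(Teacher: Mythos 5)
Your argument is correct and follows the same route as the paper: both apply $\pi_\ell$ to the triangle of Theorem~\ref{factorization thm} and use that $\pi_\ell(P^{2np^k-2}(p^{r+1}))$ is in the stable range for $k$ large, so the split injection into $\pi_\ell(\Omega^2P^{2n+1}(p^r))=\pi_{2np^k+j-1}(P^{2n+1}(p^r))$ exhibits $\pi_j(\mathbb{S}/p^{r+1})$ as a summand. (Your stable-range bound $\ell\le 2N-3$ for an $(N-2)$-connected space should be $\ell\le 2N-4$, but this off-by-one is immaterial since the bound still grows without limit as $k\to\infty$.)
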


\begin{proof}
For each $j\in \mathbb{Z}$, we have 
\[ \pi_j(\mathbb{S}/p^{r+1}) = \pi_{j+2np^k-3}^s(P^{2np^k-2}(p^{r+1})) = \pi_{j+2np^k-3}(P^{2np^k-2}(p^{r+1})) \] 
for all sufficiently large $k$. Therefore the first commutative diagram in Theorem~\ref{factorization thm} implies that $\pi_j(\mathbb{S}/p^{r+1})$ retracts off $\pi_{j+2np^k-3}(\Omega^2P^{2n+1}(p^r)) = \pi_{j+2np^k-1}(P^{2n+1}(p^r))$.
\end{proof}

\begin{rem}
The second commutative diagram in Theorem~\ref{factorization thm} implies that similar results hold for the unstable homotopy groups of even-dimensional odd primary Moore spaces.
\end{rem}

\section{$v_1$-periodic families} \label{new families section}

In this section we construct new infinite families of higher torsion elements in the unstable homotopy groups of Moore spaces using Theorem~\ref{factorization thm} and periodic self-maps
\[ v_1\colon P^{n+q_r}(p^r) \longrightarrow P^n(p^r), \]
as introduced by Adams \cite{A} in his study of the image of the $J$-homomorphism. Here, 
\[ q_r = \begin{cases} qp^{r-1} & \text{if } p \text{ is odd} \\ \max(8, 2^{r-1}) & \text{if } p=2, \end{cases} \]
where $q=2(p-1)$ and $v_1$ induces an isomorphism in $K$-theory. Such maps exist unstably provided $n\ge 2r+3$ by \cite{DM} and desuspend further to $P^3(p)$ in case $p$ is odd and $r=1$ by \cite{CN}.

Restricting each iterate $v_1^t = v_1 \circ \Sigma^{q_r}v_1 \circ\cdots\circ \Sigma^{tq_r}v_1$ of $v_1$ to the bottom cell gives an infinite family of maps
\[ S^{n+tq_r-1} \longrightarrow P^{n+tq_r}(p^r) \stackrel{v_1^t}{\longrightarrow} P^n(p^r) \]
which generate $\mathbb{Z}/p^r$ summands in $\pi_{n+tq_r-1}(P^n(p^r))$ for $t\ge 0$, and composing with the pinch map $q\colon P^n(p^r) \to S^n$ gives rise to the first studied infinite families in the stable homotopy groups of spheres. For example, if $p$ is odd, these composites form the $\alpha$-family and generate the $p$-component of the image of $J$ in $\pi_{tq_r-1}(\mathbb{S})$ (see \cite[Proposition~1.1]{CK}). To generate $\mathbb{Z}/p^{r+1}$ summands in $\pi_\ast(P^n(p^r))$ when $p$ is odd, we apply the same procedure to mod $p^{r+1}$ Moore spaces and compose into $P^n(p^r)$ along the maps in the diagrams of Theorem~\ref{factorization thm}. 

When $p=2$, analogous unstable maps $\delta_1\colon P^{4n-2}(2^{r+1})\to \Omega^2P^{2n+1}(2^r)$ realizing the stable splittings of Theorem~\ref{splitting thm} only exist when $n=2$ or $4$, as we show below. In the $n=2$ case, no Adams self-map of the mod $2^{r+1}$ Moore spectrum desuspends far enough to precompose $\delta_1$ with. Instead, we show that an infinite family of elements of order $8$ in the homotopy groups of spheres constructed in \cite{KR} factors through $P^6(8)$ and injects along $\delta_1\colon P^6(8) \to \Omega^2P^5(4)$ (see Theorem~\ref{new even families} below). 

\subsection{The odd primary case}

Let $n>1$. As usual, for an odd prime $p$ we let $q=2(p-1)$. The following is a more precise statement of Theorem~\ref{new odd families}.

\begin{thm}
Let $p$ be an odd prime and $r\ge 1$.
\begin{enumerate}
\item If $k \ge \log_p(\frac{r+4}{n})$, then $\pi_{2np^k-1+tqp^r}(P^{2n+1}(p^r))$ contains a $\mathbb{Z}/p^{r+1}$ summand for every $t\ge 0$.
\item If $k \ge \log_p(\frac{r+3}{2n-1})$, then $\pi_{(4n-2)p^k-1+tqp^r}(P^{2n}(p^r))$ contains a $\mathbb{Z}/p^{r+1}$ summand for every $t\ge 0$.
\end{enumerate}

\end{thm}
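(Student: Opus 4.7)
The plan is to combine the map $\delta_k \colon P^{2np^k-2}(p^{r+1}) \to \Omega^2 P^{2n+1}(p^r)$ of Lemma~\ref{odd higher Bockstein} (whose stable retraction yields Theorem~\ref{factorization thm}) with unstable $v_1$-periodic Adams self-maps on mod $p^{r+1}$ Moore spaces in order to produce $\mathbb{Z}/p^{r+1}$ summands in $\pi_*(P^{2n+1}(p^r))$. I will describe part~(a); part~(b) is identical, using the second commutative diagram of Theorem~\ref{factorization thm} with source $P^{(4n-2)p^k-2}(p^{r+1})$ in place of $P^{2np^k-2}(p^{r+1})$.

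Set $m=2np^k-2$. The first step is to use the hypothesis $k \ge \log_p((r+4)/n)$ to verify that $m \ge 2(r+1)+3$, so that by \cite{DM} unstable Adams self-maps $v_1 \colon P^{\ell+qp^r}(p^{r+1}) \to P^\ell(p^{r+1})$ exist on every relevant shift. Iterating yields $v_1^t \colon P^{m+tqp^r}(p^{r+1}) \to P^m(p^{r+1})$ for each $t \ge 0$; precomposing with the bottom cell inclusion $i \colon S^{m+tqp^r-1} \to P^{m+tqp^r}(p^{r+1})$ produces classes
\[
\alpha_t = v_1^t \circ i \in \pi_{m+tqp^r-1}(P^m(p^{r+1})).
\]

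The second step is a $K$-theoretic detection argument: the stabilization $E^\infty \alpha_t$ generates a direct $\mathbb{Z}/p^{r+1}$ summand of $\pi_{m+tqp^r-1}^s(P^m(p^{r+1})) \cong \pi_{tqp^r-1}(\mathbb{S}/p^{r+1})$. Since $v_1$ acts as an isomorphism on $K$-theory, iterated composites of $v_1$ with the unit retain order exactly $p^{r+1}$ stably, and the mod $p^{r+1}$ Adams $e$-invariant provides the retraction splitting off the summand; this is the classical content of the $\alpha$-family construction (cf.\ \cite{A,CK}).

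The final step is a diagram chase. Theorem~\ref{factorization thm} asserts that the stabilization of $P^m(p^{r+1})$ factors through $\Omega^2 P^{2n+1}(p^r)$ via $\delta_k$, giving on $\pi_j(\,\cdot\,)$ with $j = m+tqp^r-1$ a factorization
\[
\pi_j(P^m(p^{r+1})) \xrightarrow{(\delta_k)_*} \pi_j(\Omega^2 P^{2n+1}(p^r)) \xrightarrow{\varphi_*} \pi_j^s(P^m(p^{r+1}))
\]
of $(E^\infty)_*$. Let $G \cong \mathbb{Z}/p^{r+1}$ denote the summand of $\pi_j^s(P^m(p^{r+1}))$ generated by $(E^\infty)_* \alpha_t$, with retraction $\rho$. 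Sending the chosen generator of $G$ to $(\delta_k)_* \alpha_t$ defines a map $s \colon G \to \pi_j(\Omega^2 P^{2n+1}(p^r))$ such that $\rho \circ \varphi_* \circ s = \mathrm{id}_G$ by construction, exhibiting $(\delta_k)_* \alpha_t$ as a generator of a $\mathbb{Z}/p^{r+1}$ summand of $\pi_j(\Omega^2 P^{2n+1}(p^r)) = \pi_{j+2}(P^{2n+1}(p^r)) = \pi_{2np^k-1+tqp^r}(P^{2n+1}(p^r))$, as required.

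The main obstacle is the second step, namely verifying that the stabilization of $\alpha_t$ generates a genuine $\mathbb{Z}/p^{r+1}$ summand, rather than merely an element of that order. Once this standard $K$-theoretic fact is in place, the rest of the proof is a direct application of Theorem~\ref{factorization thm} together with the short splitting argument above.
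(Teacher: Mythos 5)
Your proposal is correct and follows essentially the same route as the paper: verify the dimension bound so that the unstable Adams map on $P^{2np^k-2}(p^{r+1})$ exists, detect $v_1^t\circ i$ stably by $K$-theory, and push through the factorization of $E^\infty$ from Theorem~\ref{factorization thm}. Your third step merely makes explicit the retraction that the paper leaves implicit (and note the harmless index shift: $\pi^s_{m+tqp^r-1}(P^m(p^{r+1}))\cong\pi_{tqp^r}(\mathbb{S}/p^{r+1})$, not $\pi_{tqp^r-1}$).
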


\begin{proof}
By Theorem~\ref{factorization thm}, $E^\infty\colon P^{2np^k-2}(p^{r+1}) \to QP^{2np^k-2}(p^{r+1})$ factors as a composite
\[ P^{2np^k-2}(p^{r+1}) \stackrel{\delta_k}{\longrightarrow} \Omega^2P^{2n+1}(p^r) \longrightarrow QP^{2np^k-2}(p^{r+1}). \]
Note that the restriction of $\delta_k$ to the bottom cell defines an element of $\pi_{2np^k-1}(P^{2n+1}(p^r))$ of order $p^{r+1}$. The bound on $k$ ensures that $2np^k-2 \ge 2(r+1)+4$, which implies that an unstable representative $v_1\colon P^{2np^k-2+qp^r}(p^{r+1}) \to P^{2np^k-2}(p^{r+1})$ of the Adams map exists by \cite[Proposition~2.11]{DM}. That the restriction of any iterate $v_1^t$ to the bottom cell has order~$p^{r+1}$ follows from the fact that $v_1^t$ induces an isomorphism in $K$-theory. The composite
\[ S^{2np^k-3+tqp^r} \longrightarrow P^{2np^k-2+tqp^r}(p^{r+1}) \stackrel{v_1^t}{\longrightarrow} P^{2np^k-2}(p^{r+1}) \stackrel{\delta_k}{\longrightarrow} \Omega^2P^{2n+1}(p^r) \]
therefore also has order $p^{r+1}$ for all $t\ge 0$ since composing further into $QP^{2np^k-2}(p^{r+1})$ gives the adjoint of the restriction of $\Sigma^\infty v_1^t$ to the bottom cell. Part (b) is proved similarly.
\end{proof}

\begin{rem}
The proof above shows that each $\delta_k$ generates an infinite $v_1$-periodic family in $\pi_\ast(P^{2n+1}(p^r); \mathbb{Z}/p^{r+1})$ giving rise to an infinite family of higher torsion elements in $\pi_\ast(P^{2n+1}(p^r))$. We point out that in the loop space decomposition \cite{CMN3}
\[ \Omega P^{2n+1}(p^r) \simeq T^{2n+1}\{p^r\} \times \Omega \left( \bigvee_\alpha P^{n_\alpha}(p^r) \right), \]
each of these elements lands in the homotopy of the bottom indecomposable factor $T^{2n+1}\{p^r\}$, and many more infinite families than are indicated here can be obtained by applying the Hilton--Milnor theorem to the second factor and iterating our construction above.
\end{rem}

\subsection{The $2$-primary case}

We consider next the problem of desuspending the inclusion of the stable summand $P^{4n-2}(2^{r+1})$ of $\Omega^2P^{2n+1}(2^r)$ given by Theorem~\ref{splitting thm}(b) and mimicking the construction above of unstable $v_1$-periodic families of higher odd primary torsion elements. 

Note that a homotopy commutative diagram
\begin{equation} \label{diagram}
\begin{gathered}
\xymatrix{
P^{4n-2}(2^{r+1}) \ar[r] \ar[rd]_{E^\infty} & \Omega^2P^{2n+1}(2^r) \ar[d] \\
& QP^{4n-2}(2^{r+1})
}
\end{gathered}
\end{equation}
cannot exist unless $r>1$ since $D_2(\Omega^2P^{2n+1}(2))$ is stably indecomposable by Theorem~\ref{splitting thm}(c). Furthermore, such a factorization implies $Q_1u \in H_{4n-3}(\Omega^2P^{2n+1}(2^r))$ is spherical since only this class lies in the image of the $(r+1)^\text{st}$ Bockstein in degree $4n-3$. For $r=1$, it follows from the proposition below that this class is spherical only in Kervaire invariant dimensions. 

\begin{prop}[{\cite[Proposition~2.21]{W}}] \label{Wu's prop}
The class $u^2\in H_{4n-2}(\Omega P^{2n+1}(2))$ is spherical if and only if the Whitehead square $w_{2n-1}\in \pi_{4n-3}(S^{2n-1})$ is divisible by $2$.
\end{prop}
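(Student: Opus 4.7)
The plan is to detect the sphericity of $u^2$ via the second James filtration. Set $X=P^{2n}(2)$, so $\Omega P^{2n+1}(2)\simeq J(X)$, the James construction. Since each successive quotient $J_{k+1}(X)/J_k(X)\simeq X^{\wedge (k+1)}$ is at least $(6n-4)$-connected for $k\ge 2$, the inclusion $J_2(X)\hookrightarrow J(X)$ induces an isomorphism on $H_{4n-2}(-;\mathbb{Z}/2)$ and a surjection on $\pi_{4n-2}$ when $n\ge 2$. Hence $u^2$ is spherical in $\Omega P^{2n+1}(2)$ if and only if the corresponding class in $H_{4n-2}(J_2(X);\mathbb{Z}/2)$ is spherical in $J_2(X)$.

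I would next analyze the cell structure of $J_2(X)$: the bottom cell $e^{4n-2}=S^{2n-1}\wedge S^{2n-1}$ of $X\wedge X\simeq J_2(X)/J_1(X)$ carries $u^2$ and attaches to $J_1(X)=X$ via the Whitehead product $[j,j]\in \pi_{4n-3}(X)$, where $j\colon S^{2n-1}\hookrightarrow X$ is the bottom cell inclusion. By cellular approximation, any spherical representative of $u^2$ in $J_2(X)$ factors through the $(4n-2)$-skeleton $X\cup_{[j,j]} e^{4n-2}$, and the standard two-cell splitting argument (collapse $X$ to produce a degree-$1$ self-map of $S^{4n-2}$) shows such a representative exists if and only if $X\cup_{[j,j]} e^{4n-2}\simeq X\vee S^{4n-2}$, that is, if and only if $[j,j]=0$. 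By naturality of the Whitehead product, $[j,j]=j_*(w_{2n-1})$.

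It remains to show $j_*(w_{2n-1})=0$ if and only if $w_{2n-1}\in 2\pi_{4n-3}(S^{2n-1})$. For this I would use the long exact sequence of the pair $(P^{2n}(2),S^{2n-1})$ together with the Blakers--Massey identification $\pi_k(P^{2n}(2),S^{2n-1})\cong \pi_k(S^{2n})$, which is an isomorphism for $k\le 4n-3$ and a surjection for $k=4n-2$. The connecting homomorphism $\partial$ sends a suspended class $E\gamma$ to $2\gamma$, so the image of $\partial$ at $k=4n-2$ contains $2\pi_{4n-3}(S^{2n-1})$; the main technical step, and the principal obstacle, is checking that the boundary of the triad Whitehead product generating the Blakers--Massey kernel at $k=4n-2$ also lies in $2\pi_{4n-3}(S^{2n-1})$. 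Granting this, $\ker(j_*)=2\pi_{4n-3}(S^{2n-1})$ follows, yielding the claimed equivalence.
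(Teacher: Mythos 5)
Your overall strategy is viable and, in its second half, genuinely different from the source the paper relies on. The statement is quoted from \cite{W}, and the argument the paper actually uses (recapitulated in the proof of Proposition~\ref{spherical classes}) agrees with your first half: both reduce sphericity of $u^2$ to the vanishing of $j_*(w_{2n-1})$ in $\pi_{4n-3}(P^{2n}(2))$. Your reduction through $\mathrm{sk}_{4n-2}J_2(X)=X\cup_{[j,j]}e^{4n-2}$ is correct, with the small caveat that collapsing $X$ produces an odd-degree (not necessarily degree-one) self-map of $S^{4n-2}$, so a priori you only conclude that an odd multiple of $[j,j]$ vanishes; this suffices because $\pi_*(P^{2n}(2))$ is a finite $2$-group. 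Where you diverge is the divisibility step: Wu and the paper use the fibration $\Omega P^{2n}(2)\to E\to S^{2n-1}\xrightarrow{j}P^{2n}(2)$, lift $w_{2n-1}$ to $E$, identify $\mathrm{sk}_{4n-3}(E)\simeq S^{2n-1}\vee S^{4n-3}$ with projection of degree $2$ on the bottom sphere, and apply Barratt's distributivity formula; you instead compute $\ker(j_*)$ from the long exact sequence of the pair $(P^{2n}(2),S^{2n-1})$ and Blakers--Massey. Both work, and yours avoids analyzing the fibre, at the cost of the excision failure term.

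Two points remain, and both are settled by the same facts. The gap you flag is real but closes easily: the kernel of $\pi_{4n-2}(P^{2n}(2),S^{2n-1})\to\pi_{4n-2}(S^{2n})$ is generated by the relative Whitehead product $[\Phi,\iota]$ of the characteristic map $\Phi\in\pi_{2n}(P^{2n}(2),S^{2n-1})$ of the top cell with $\iota\in\pi_{2n-1}(S^{2n-1})$, and $\partial[\Phi,\iota]=\pm[\partial\Phi,\iota]=\pm[2\iota,\iota]=\pm 2[\iota,\iota]=0$, since $2[\iota_m,\iota_m]=0$ for $m$ odd; in particular it lies in $2\pi_{4n-3}(S^{2n-1})$. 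The point you did not flag is that ``$\partial$ sends $E\gamma$ to $2\gamma$'' is not automatic: the correct statement is $\partial(\Phi_*\gamma)=(2\iota)\circ\gamma$, and left composition with $2\iota$ equals multiplication by $2$ only up to the Barratt correction term $[\iota,\iota]\circ H_2(\gamma)=H_2(\gamma)\,w_{2n-1}$, where $H_2(\gamma)\in\pi_{4n-3}(S^{4n-3})\cong\mathbb{Z}$ is precisely the classical Hopf invariant --- this is the one dimension where the correction can be nonzero. It vanishes here because either $H_2(\gamma)$ is even and $2w_{2n-1}=0$, or $2n-1\in\{1,3,7\}$ by Adams, in which case $w_{2n-1}=0$ outright since these spheres are $H$-spaces. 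The same observation is needed for the easy direction of your equivalence, since $j_*(2\gamma)=2j_*(\gamma)$ is not obviously zero, whereas $j_*((2\iota)\circ\gamma)=0$ because $j\circ 2\iota$ is null. With these two additions your argument is complete.
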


For $r>1$, the same argument leads to the following.

\begin{prop} \label{spherical classes}
Let $r>1$. The following conditions are equivalent:
\begin{enumerate}
\item $Q_1u \in H_{4n-3}(\Omega^2P^{2n+1}(2^r))$ is spherical; 
\item $u^2 \in H_{4n-2}(\Omega P^{2n+1}(2^r))$ is spherical;
\item $n=1$, $2$ or $4$.
\end{enumerate}
\end{prop}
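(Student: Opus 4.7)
The plan is to establish $(c) \Rightarrow (a) \Leftrightarrow (b) \Rightarrow (c)$: the equivalence $(a) \Leftrightarrow (b)$ is immediate from Kudo's transgression theorem, $(c) \Rightarrow (a)$ is a classical Hopf-invariant-one construction, and $(b) \Rightarrow (c)$ is the main substantive step, adapting Wu's proof of the $r=1$ case to $r > 1$.

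For $(c) \Rightarrow (a)$, when $n \in \{1, 2, 4\}$ the existence of a Hopf invariant one element gives the James splitting $\Omega S^{2n} \simeq S^{2n-1} \times \Omega S^{4n-1}$, hence $\Omega^2 S^{2n} \simeq \Omega S^{2n-1} \times \Omega^2 S^{4n-1}$. The double adjoint of $\mathrm{id}_{S^{4n-1}}$ gives a map $S^{4n-3} \to \Omega^2 S^{4n-1}$; composing with the inclusion of the second factor and then with $\Omega^2$ applied to the bottom-cell inclusion $S^{2n} \hookrightarrow P^{2n+1}(2^r)$ yields a spherical representative of $Q_1 u$, since $Q_1 u$ is the unique class in $H_{4n-3}(\Omega^2 S^{2n}; \mathbb{Z}/2)$ reachable from $\Omega^2 S^{2n}$. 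For $(a) \Leftrightarrow (b)$, Kudo's transgression theorem identifies $Q_1 u$ with the transgression of $u^2$ in the path-loop fibration $\Omega^2 P^{2n+1}(2^r) \to P\Omega P^{2n+1}(2^r) \to \Omega P^{2n+1}(2^r)$, so adjoints convert spherical representatives in either direction: the adjoint $S^{4n-2} \to \Omega P^{2n+1}(2^r)$ of a spherical representative of $Q_1 u$ has Hurewicz image $u^2$, and the composite $S^{4n-3} \hookrightarrow \Omega S^{4n-2} \to \Omega^2 P^{2n+1}(2^r)$ of a spherical representative of $u^2$ has Hurewicz image $Q_1 u$.

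The main argument is $(b) \Rightarrow (c)$. Suppose $\beta \colon S^{4n-2} \to \Omega P^{2n+1}(2^r)$ has mod $2$ Hurewicz image $u^2$ and let $\tilde\beta \colon S^{4n-1} \to P^{2n+1}(2^r)$ denote its adjoint. Form the mapping cone $C = P^{2n+1}(2^r) \cup_{\tilde\beta} e^{4n}$, whose mod $2$ cohomology has generators $u$ in degree $2n$, $v$ in degree $2n+1$, and $w$ in degree $4n$. By the classical James--Hopf formalism (or, equivalently, the path-loop Serre spectral sequence on $C$ together with Kudo's formula $\tau(\bar u^2) = Sq^{2n}(u)$), the hypothesis that $\beta$ realizes $u^2$ forces the cup product $u \cup u = Sq^{2n}(u)$ to equal $w$ in $H^{4n}(C; \mathbb{Z}/2)$. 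Now since $r > 1$ we have $Sq^1 u = 0$ in $H^\ast(P^{2n+1}(2^r); \mathbb{Z}/2)$, and $H^{2n+b}(C; \mathbb{Z}/2) = 0$ for every $2 \le b \le 2n-1$; hence $Sq^b u = 0$ for all $1 \le b \le 2n-1$. Any nontrivial Adem decomposition $Sq^{2n} = \sum Sq^{a_i} Sq^{b_i}$ with $a_i, b_i \ge 1$ would therefore give $Sq^{2n}(u) = 0$, contradicting $Sq^{2n}(u) = w$. Hence $Sq^{2n}$ is indecomposable in the mod $2$ Steenrod algebra, and by Adams' theorem on the Hopf invariant one problem this forces $2n \in \{1, 2, 4, 8\}$, i.e., $n \in \{1, 2, 4\}$.

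The main obstacle I anticipate is the identification of sphericity of $u^2$ in $\Omega P^{2n+1}(2^r)$ with the nontriviality of $Sq^{2n}(u)$ in $H^\ast(C_{\tilde\beta}; \mathbb{Z}/2)$; this is the classical Hopf-invariant / functional-cohomology-operation argument that underlies Wu's proof of the $r = 1$ case, and it must be spelled out carefully via the path-loop Serre spectral sequence and Kudo's transgression formula in order to go through in the $r > 1$ regime, where the vanishing $Sq^1 u = 0$ replaces the Kervaire invariant subtleties that arise when $r = 1$.
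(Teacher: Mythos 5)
Your steps $(a)\Leftrightarrow(b)$ and $(c)\Rightarrow(a)$ are fine and agree in substance with the paper (which proves $(c)\Rightarrow(b)$ via the Hopf invariant one map $S^{4n-1}\to S^{2n}$ and uses the path--loop fibration for the equivalence of (a) and (b)). The step you flag as the main obstacle --- identifying sphericity of $u^2$ with nonvanishing of the cup square of $u$ in the mapping cone $C=P^{2n+1}(2^r)\cup_{\tilde\beta}e^{4n}$ --- is indeed provable (it is the Boardman--Steer/James--Hopf invariant statement, using that $H_{4n-2}(\Omega P^{2n+1}(2^r);\mathbb{Z}/2)$ is one-dimensional and entirely of weight $2$), so that is not where the argument breaks.

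The genuine gap is the final sentence of your $(b)\Rightarrow(c)$ argument. The Adem-relation computation (using $Sq^b_{\phantom{b}}u=0$ for $1\le b\le 2n-1$) shows only that $Sq^{2n}$ is indecomposable, i.e.\ that $2n$ is a power of $2$; this gives $n\in\{1,2,4,8,16,\dots\}$, not $n\in\{1,2,4\}$. To exclude $2n\ge 16$ you invoke Adams' Hopf invariant one theorem, but that theorem concerns two-cell complexes $S^{2n}\cup e^{4n}$, whereas $C$ has a third cell in dimension $2n+1$ which cannot be split off (there is no retraction $P^{2n+1}(2^r)\to S^{2n}$). In Adams' secondary-operation decomposition of $Sq^{2^{k}}$, the term $a_{0,0}\Phi_{0,0}(u)$ --- where $\Phi_{0,0}$ is based on $Sq^1Sq^1=0$ and raises degree by $1$ --- is the unique term landing in a nonzero group of $H^\ast(C)$, namely on the $(2n+1)$-cell; for $r=2$ one has $\Phi_{0,0}(u)=v\neq 0$ (the secondary Bockstein), so the relation only yields $Sq^{2n}u=a_{0,0}v+\cdots$, and you have no control over $a_{0,0}v\in H^{4n}(C)$. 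That some additional input is genuinely needed is illustrated by the $r=1$ case: a complex $P^{17}(2)\cup e^{32}$ with $Sq^{16}u\neq 0$ \emph{does} exist (by Wu's Proposition and the existence of $\theta_3$), so no argument relying only on primary operations and the two-cell form of Adams' theorem can succeed. The paper avoids this entirely by a geometric argument: sphericity of $u^2$ forces the Whitehead square $w_{2n-1}$ to be divisible by $2^r$, and divisibility by $4$ already forces $w_{2n-1}=0$, whence $n\in\{1,2,4\}$ by Adams' theorem on H-space spheres. If you wish to keep a Steenrod-theoretic route, you must control the action of the Steenrod algebra and the secondary operations on the $(2n+1)$-cell of $C$, which in effect amounts to redoing the Whitehead-square analysis.
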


\begin{proof}
If a map $f\colon S^{4n-3} \to \Omega^2P^{2n+1}(2^r)$ has mod $2$ reduced Hurewicz image $Q_1u$, then the adjoint of $f$ factors as
\[ f'\colon S^{4n-2} \stackrel{\Sigma f}{\longrightarrow} \Sigma\Omega^2P^{2n+1}(2^r) \stackrel{\sigma}{\longrightarrow} \Omega P^{2n+1}(2^r), \]
where $\sigma$ induces the homology suspension $\sigma_\ast \colon H_\ast(\Omega^2P^{2n+1}(2^r)) \to H_{\ast+1}(\Omega P^{2n+1}(2^r))$. Thus $\sigma_\ast(Q_1u)=u^2$ is the Hurewicz image of $f'$. 

Conversely, given $g'\colon S^{4n-2} \to \Omega P^{2n+1}(2^r)$ with $g'_\ast(\iota_{4n-2})=u^2$, the adjoint of $g'$ factors as
\[ g\colon S^{4n-3} \stackrel{E}{\longrightarrow} \Omega S^{4n-2} \stackrel{\Omega g'}{\longrightarrow} \Omega^2P^{2n+1}(2^r). \]
Consider the morphism of path-loop fibrations induced by $g'$. Since $u^2$ transgresses to $Q_1u$ in the Serre spectral sequence associated to the path-loop fibration over $\Omega P^{2n+1}(2^r)$, it follows by naturality that $g_\ast(\iota_{4n-3})=Q_1u$. Therefore conditions (a) and (b) are equivalent.

If $n=1$, $2$ or $4$, then the adjoint of the Hopf invariant one map $S^{4n-1} \to S^{2n}$ has Hurewicz image $\iota_{2n-1}^2 \in H_{4n-2}(\Omega S^{2n})$, so the composite $S^{4n-2} \to \Omega S^{2n} \xrightarrow{\Omega i\,} \Omega P^{2n+1}(2^r)$ has Hurewicz image $u^2$.

Conversely, if $u^2 \in H_{4n-2}(\Omega P^{2n+1}(2^r))$ is spherical, then the proof given in \cite{W} of Proposition~\ref{Wu's prop} above shows that $i\circ w_{2n-1}$ is null homotopic in the diagram
\[
\xymatrix{
\Omega P^{2n}(2^r) \ar[r] & E \ar[r]^-f & S^{2n-1} \ar[r]^-i & P^{2n}(2^r) \\
 & & S^{4n-3} \ar[u]_{w_{2n-1}} \ar@{-->}[ul]^-\ell
}
\]
where the top row is a fibration sequence. By \cite[Lemma~21.1]{C course}, $[u,v] \in H_{4n-3}(\Omega P^{2n}(2^r))$ is spherical and so is its image in $H_{4n-3}(E)$. As in \cite{W}, it follows that the $(4n-3)$-skeleton of~$E$ is homotopy equivalent to $S^{2n-1}\vee S^{4n-3}$ and $f|_{S^{4n-3}}$ is null homotopic. Therefore a lift~$\ell$ may be chosen to factor through $f|_{S^{2n-1}}$, which is of degree $2^r$. Since the degree $2$ map induces multiplication by $2$ on $\pi_{4n-3}(S^{2n-1})$ by Barratt's distributivity formula \cite[Proposition~4.3]{C course}, the Whitehead square $w_{2n-1}$ is divisible by $2^r$, which implies $n\in \{1,2,4\}$ since $r>1$. 
\end{proof}

By Proposition~\ref{spherical classes}, diagrams of the form \eqref{diagram} inducing the stable splittings of Theorem~\ref{splitting thm}(b) (where $n>1$ is assumed) cannot exist if $n\neq 2$ or $4$. We verify that such diagrams do exist in these two exceptional dimensions.

\begin{thm} \label{even factorization thm}
Let $r>1$. Then there exist homotopy commutative diagrams
\[
\xymatrix{
P^6(2^{r+1}) \ar[r] \ar[rd]_{E^\infty} & \Omega^2P^5(2^r) \ar[d] \\
& QP^6(2^{r+1})
} \qquad\qquad
\xymatrix{ 
P^{14}(2^{r+1}) \ar[r] \ar[rd]_{E^\infty} & \Omega^2P^9(2^r) \ar[d] \\
& QP^{14}(2^{r+1}).
}
\]
\end{thm}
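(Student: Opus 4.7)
The plan is to construct unstable maps $\delta\colon P^{4n-2}(2^{r+1})\to\Omega^2P^{2n+1}(2^r)$ for $n\in\{2,4\}$ realizing the inclusion of the $P^{4n-2}(2^{r+1})$ summand of Lemma~\ref{even lemma}, and then to derive the commutative diagrams by naturality of the unit $\eta\colon X\to QX$, just as in the proof of Theorem~\ref{factorization thm}.

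First, let $h_n\in\pi_{4n-1}(S^{2n})$ be the Hopf invariant one element ($h_2=\nu$, $h_4=\sigma$), a generator of the infinite cyclic summand. The first step is to show that the composite $ih_n\colon S^{4n-1}\xrightarrow{h_n}S^{2n}\xrightarrow{i}P^{2n+1}(2^r)$ satisfies $2^{r+1}\cdot(ih_n)=0$ in $\pi_{4n-1}(P^{2n+1}(2^r))$, so that it extends over $P^{4n}(2^{r+1})$ to a map $\tilde h_n$; then $\delta$ is defined to be the double adjoint of $\tilde h_n$. The order computation proceeds as follows: James's distributivity formula gives $(2^r\iota_{2n})\circ h_n = 2^rh_n + \binom{2^r}{2}[\iota_{2n},\iota_{2n}]$, and postcomposing with $i$ (using $i\circ(2^r\iota_{2n})=0$ from the cofibre sequence defining $P^{2n+1}(2^r)$) yields $2^r(ih_n) = -\binom{2^r}{2}[i,i]$, where $[i,i]=i_\ast[\iota_{2n},\iota_{2n}]$. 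Since $2^r i=0$ in $\pi_{2n}(P^{2n+1}(2^r))$ and the Whitehead bracket is biadditive, $2^r[i,i]=[2^r i,i]=0$, so
\[ 2^{r+1}(ih_n) \;=\; -(2^r-1)\cdot 2^r[i,i] \;=\; 0. \]

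Second, to verify that $\Sigma^\infty\delta$ realizes the stable splitting inclusion of $P^{4n-2}(2^{r+1})$, one analyzes mod-$2$ homology. Restricted to the bottom cell, $\delta|_{S^{4n-3}}$ is the double adjoint of $ih_n$, which by the argument in the proof of Proposition~\ref{spherical classes} has Hurewicz image $Q_1u\in H_{4n-3}(\Omega^2P^{2n+1}(2^r))$. Naturality of the Bockstein forces $\beta^{(r+1)}\delta_\ast(v_{4n-2}) = \delta_\ast(u_{4n-3}) = Q_1u$ on the $(r+1)^\mathrm{st}$ page of the homology Bockstein spectral sequence, and by Lemma~\ref{even higher Bockstein} this forces the $\lambda(v,u)$-component of the weight-$2$ part of $\delta_\ast(v_{4n-2})$ to be nonzero. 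The stable retract $\rho\colon\Sigma^\infty\Omega^2P^{2n+1}(2^r)\to P^{4n-2}(2^{r+1})$ from Lemma~\ref{even lemma} sends $Q_1u\mapsto u_{4n-3}$ and $\lambda(v,u)\mapsto v_{4n-2}$ while annihilating every other weight-$2$ class and all classes of other weights. Hence $\rho\circ\Sigma^\infty\delta$ is a mod-$2$ homology isomorphism of $P^{4n-2}(2^{r+1})$, and therefore a homotopy equivalence; after precomposing $\delta$ with the inverse of this self-equivalence on the source, we may assume $\rho\circ\Sigma^\infty\delta\simeq\mathrm{id}$. The commutative diagrams then follow from naturality of $\eta$, since the right-hand vertical map is $Q(\rho)\circ\eta$ and
\[ Q(\rho)\circ\eta\circ\delta \;=\; Q(\rho\circ\Sigma^\infty\delta)\circ\eta \;=\; E^\infty. \]

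The main obstacle is the order computation in the first step, which requires combining James's distributivity formula with the biadditivity identity $2^r[i,i]=0$ coming from the cofibre structure of $P^{2n+1}(2^r)$. Once the extension $\tilde h_n$ is in hand, the homological analysis in the second step is essentially forced by naturality of $\beta^{(r+1)}$ and Lemma~\ref{even higher Bockstein}, so the particular choice of extension is immaterial up to the final self-equivalence adjustment.
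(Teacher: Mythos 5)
Your proposal is correct and follows essentially the same route as the paper: produce an unstable map $P^{4n-2}(2^{r+1})\to\Omega^2 P^{2n+1}(2^r)$ extending a spherical class carrying $Q_1u$ (which is exactly why only $n=2,4$ occur), then compose with the stable retraction coming from the Snaith splitting and Lemma~\ref{even lemma}, and check in mod~$2$ homology (via naturality of $\beta^{(r+1)}$ and Lemma~\ref{even higher Bockstein}) that the composite is the stabilization map. The one substantive difference is how the bottom-cell class is produced and its order bounded: the paper invokes Proposition~\ref{spherical classes} for the sphericity of $Q_1u$ and cites \cite[Proposition~13.3]{C course} for the upper bound $2^{r+1}$ on the order, whereas you construct the class explicitly as the double adjoint of $ih_n$ and derive $2^{r+1}(ih_n)=-(2^r-1)[2^ri,i]=0$ from the distributivity formula, which is a self-contained and correct replacement; the paper then concludes by a connectivity argument where you conclude by a stable self-equivalence argument, and these amount to the same thing. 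One cosmetic point: rather than precomposing the unstable map $\delta$ with the inverse of the stable self-equivalence $\rho\circ\Sigma^\infty\delta$ (which tacitly requires an unstable representative of that inverse --- harmless here, since $[P^{4n-2}(2^{r+1}),P^{4n-2}(2^{r+1})]$ is already in the stable range for $n\ge 2$), it is cleaner to absorb the inverse into the stable retraction $\rho$ itself.
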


\begin{proof}
Let $n=2$ or $4$ and let $f\colon S^{4n-3} \to \Omega^2P^{2n+1}(2^r)$ be a map with mod $2$ reduced Hurewicz image $Q_1u$. Then by Lemma~\ref{even higher Bockstein}, the integral Hurewicz image of $f$ is a generator of $H_{4n-3}(\Omega^2P^{2n+1}(2^r); \mathbb{Z})\cong \mathbb{Z}/2^{r+1}$, so $f$ has order at least $2^{r+1}$. That $f$ has order at most~$2^{r+1}$ follows from \cite[Proposition~13.3]{C course}, so $f$ extends to a map $\bar{f}\colon P^{4n-2}(2^{r+1}) \to \Omega^2P^{2n+1}(2^r)$ with $\beta^{(r+1)}\bar{f}_\ast(v_{4n-2})=Q_1u$. Since $r>1$, the Snaith splitting and Theorem~\ref{splitting thm}(b) give a composite
\[ \Omega^2P^{2n+1}(2^r) \longrightarrow QD_2(\Omega^2P^{2n+1}(2^r)) \longrightarrow QP^{4n-2}(2^{r+1}) \]
which is an epimorphism on $H_{4n-2}(\:)$ and $H_{4n-3}(\:)$. It follows that the composition of $\bar{f}$ with the composite above is $(4n-2)$-connected and hence homotopic to the stabilization map $E^\infty\colon P^{4n-2}(2^{r+1}) \to QP^{4n-2}(2^{r+1})$. 
\end{proof}

\begin{thm} \label{new even families}
Let $r=2$ or $3$. Then
\begin{enumerate}
\item $\pi_{3+8t}(P^5(4))$ contains a $\mathbb{Z}/8$ summand for every $t\ge 1$;
\item $\pi_{7+8t}(P^9(2^r))$ contains a $\mathbb{Z}/2^{r+1}$ summand for every $t\ge 1$.
\end{enumerate}
\end{thm}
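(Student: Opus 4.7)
The strategy for both parts is to transport the argument of Theorem~\ref{new odd families} to the $2$-primary setting, using the unstable factorizations $\delta_1\colon P^{4n-2}(2^{r+1})\to\Omega^2P^{2n+1}(2^r)$ provided by Theorem~\ref{even factorization thm} together with the stable retractions provided by Theorem~\ref{splitting thm}(b). The key observation is that if $\alpha\colon S^m\to P^{4n-2}(2^{r+1})$ is any map of stable order $2^{r+1}$, then the stable retraction of $\Sigma^\infty\delta_1$ forces $\delta_1\circ\alpha$ to have order at least $2^{r+1}$ as well, so its adjoint produces a $\mathbb{Z}/2^{r+1}$ summand in $\pi_{m+2}(P^{2n+1}(2^r))$.

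For part (b), $n=4$ and $14\ge 2(r+1)+3$ holds for both $r=2$ and $r=3$, so an unstable Adams self-map $v_1\colon P^{22}(2^{r+1})\to P^{14}(2^{r+1})$ exists by \cite{DM} and induces an isomorphism in $K$-theory. For each $t\ge 1$ I would form the composite
\[ S^{13+8(t-1)}\longrightarrow P^{14+8(t-1)}(2^{r+1})\xrightarrow{v_1^{t-1}}P^{14}(2^{r+1})\xrightarrow{\delta_1}\Omega^2P^9(2^r), \]
with first arrow the bottom cell inclusion, and take its adjoint to land in $\pi_{15+8(t-1)}(P^9(2^r))=\pi_{7+8t}(P^9(2^r))$. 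To certify order $2^{r+1}$, I would postcompose with the stable map $\Omega^2P^9(2^r)\to QP^{14}(2^{r+1})$ from Theorem~\ref{even factorization thm}; the result is the adjoint of the restriction of $\Sigma^\infty v_1^{t-1}$ to the bottom cell of $\Sigma^\infty P^{14+8(t-1)}(2^{r+1})$, which has order $2^{r+1}$ since $v_1$ is a $K$-theory equivalence. This is a direct transcription of the odd-primary proof of Theorem~\ref{new odd families}.

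Part (a) is more delicate because Adams self-maps of $P^6(8)$ do not desuspend ($6<2\cdot 3+3=9$ fails the hypothesis of \cite{DM}). In their place I would invoke the family from \cite{KR}: for each $t\ge 1$ an unstable representative
\[ \tilde\alpha_t\colon S^{1+8t}\longrightarrow P^6(8) \]
of an order-$8$ element of $\pi_{1+8t}^s(P^6(8))$ in the $v_1$-periodic part. Since $\Sigma^\infty\delta_1$ admits a stable retraction to $\Sigma^\infty P^6(8)$ by Theorem~\ref{splitting thm}(b), the composite $\Sigma^\infty(\delta_1\circ\tilde\alpha_t)$ retracts onto $\Sigma^\infty\tilde\alpha_t$ and hence has stable order $8$. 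Consequently $\delta_1\circ\tilde\alpha_t$ has unstable order at least $8$, and its adjoint is the desired class in $\pi_{3+8t}(P^5(4))$ generating a $\mathbb{Z}/8$ summand.

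The main obstacle is justifying the family $\{\tilde\alpha_t\}$ in part (a): one must exhibit unstable desuspensions, into $P^6(8)$ itself, of the $v_1$-periodic order-$8$ elements of $\pi_\ast^s(\mathbb{S}/8)$ in every stem $1+8t$. Once this input from \cite{KR} is granted, the remainder of part (a) — and all of part (b) — is bookkeeping combining the stable retractions of Theorem~\ref{splitting thm}(b) with Adams' $K$-theoretic detection of iterated $v_1$, exactly as in the odd-primary proof.
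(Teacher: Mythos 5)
Your part (b) is correct and is essentially the paper's own argument: since $14\ge 2(r+1)+3$ for $r=2,3$, an unstable Adams self-map of $P^{14}(2^{r+1})$ exists by \cite{DM}, and composing bottom-cell inclusions with $v_1^{t-1}$ and with the map $P^{14}(2^{r+1})\to\Omega^2P^9(2^r)$ of Theorem~\ref{even factorization thm}, then detecting the order by pushing into $QP^{14}(2^{r+1})$ and using the $K$-theory equivalence, is exactly what the paper does.

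Part (a), however, has a genuine gap precisely at the point you flag as ``the main obstacle'': you never construct the family $\tilde\alpha_t\colon S^{1+8t}\to P^6(8)$, and \cite{KR} does not supply it directly. What \cite[Theorem~E]{KR} gives is a family of composites $N_t\colon S^{8t}\xrightarrow{i}P^{1+8t}(8)\xrightarrow{v_1^{t-1}}P^9(8)\xrightarrow{\nu^\sharp}S^5$ of order $8$ (with real $e$-invariant $\frac{b}{8}$, $b$ odd); these land in a sphere, not in $P^6(8)$, and the unstable Adams map for $\mathbb{S}/8$ only desuspends to $P^9(8)$, far above $P^6(8)$. The paper bridges this by suspending once and proving (Lemma~\ref{nu factorization}) that an extension $\nu^\sharp\colon P^{10}(8)\to S^6$ of $\nu\in\pi_9(S^6)$ can be chosen to factor through the pinch map $q\colon P^6(8)\to S^6$; this in turn requires computing $\pi_9(P^6(8))\cong\mathbb{Z}/2\oplus\mathbb{Z}/8$ via the fibre of $q$ (using $8w_5=0$) to see that the lift of $\nu$ has order $8$. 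The sought-after map is then the composite $S^{1+8t}\to P^{2+8t}(8)\xrightarrow{v_1^{t-1}}P^{10}(8)\to P^6(8)$, and its stable nontriviality of order $8$ is certified not by a stable retraction onto $\Sigma^\infty P^6(8)$ (which would require already knowing the stable order of $\tilde\alpha_t$) but by composing all the way down to $QS^6$, where one recovers $\Sigma^\infty N_t$ and invokes the $e$-invariant computation of \cite{KR}; the upper bound on the order comes from $8i=0$. Without this construction, your part (a) assumes the essential content of the statement rather than proving it.
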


\begin{proof}
We begin with part (b). Consider the second diagram in Theorem~\ref{even factorization thm} and let $r=2$ or~$3$. Then for stability reasons, an unstable Adams map $v_1\colon P^{n+8}(2^{r+1}) \to P^n(2^{r+1})$ exists for $n=14$. As in the odd primary case, restricting any iterate $v_1^{t-1}\colon P^{6+8t}(2^{r+1}) \to P^{14}(2^{r+1})$ to the bottom cell yields a homotopy class of order $2^{r+1}$ and stable order $2^{r+1}$. The resulting composition with the map $P^{14}(2^{r+1}) \to \Omega^2P^9(2^r)$ therefore generates a $\mathbb{Z}/2^{r+1}$ summand in $\pi_{5+8t}(\Omega^2P^9(2^r))=\pi_{7+8t}(P^9(2^r))$ by Theorem~\ref{even factorization thm}.

For part (a), we use the fact that an unstable Adams map $v_1\colon P^{n+8}(8) \to P^n(8)$ exists for $n\ge 9$ with the property that the composite
\[ N_t\colon S^{8t} \stackrel{i}{\longrightarrow} P^{1+8t}(8) \stackrel{v_1^{t-1}}{\longrightarrow} P^9(8) \stackrel{\nu^\sharp}{\longrightarrow} S^5 \]
has order $8$ in $\pi_{8t}(S^5)$ for all $t\ge 1$ by \cite[Theorem~E]{KR}. Here $\nu^\sharp$ denotes an extension of $\nu\colon S^8\to S^5$. Suspending once, an extension $\nu^\sharp\colon P^{10}(8) \to S^6$ of $\nu\colon S^9 \to S^6$ can be chosen to factor through the pinch map $q\colon P^6(8)\to S^6$ (we postpone a proof of this claim to Lemma~\ref{nu factorization} below). Combining this with the first diagram in Theorem~\ref{even factorization thm}, we obtain a homotopy commutative diagram
\[
\xymatrix{
P^{2+8t}(8) \ar[r]^{v_1^{t-1}} & P^{10}(8) \ar[r] & P^6(8) \ar[r] \ar[rd]_{E^\infty} \ar[d]^q & \Omega^2P^5(4) \ar[d] \\
S^{1+8t} \ar[u]_i \ar[rr]^{\Sigma N_t} & & S^6 \ar[rd]_{E^\infty} & QP^6(8) \ar[d]^{Qq} \\
 & & & QS^6.
}
\]
The composite $E^\infty \circ \Sigma N_t$ is adjoint to $\Sigma^\infty N_t \in \pi_{8t-5}(\mathbb{S})$ and therefore has order $8$ since the proof of \cite[Theorem~E]{KR} shows $N_t$ has real $e$-invariant $\frac{b}{8}$ where $b$ is odd. Hence the composite $S^{1+8t} \to \Omega^2P^5(4)$ has order at least $8$. Since $8i=0$, the theorem follows.
\end{proof}

It remains to prove the following factorization of $\nu\in \pi_9(S^6)$ used in the proof above.

\begin{lem} \label{nu factorization}
There is a homotopy commutative diagram
\[
\xymatrix{
P^{10}(8) \ar[r] & P^6(8) \ar[d]^q \\
S^9 \ar[u]_i \ar[r]^\nu & S^6.
}
\]
\end{lem}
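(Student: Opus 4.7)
The plan is an obstruction-theoretic argument carried out at the prime $2$. (The statement is only meaningful $2$-locally, since $\nu$ has integral order $24$ in $\pi_9(S^6)$ and already fails to extend across the Moore space $P^{10}(8) = S^9 \cup_8 e^{10}$.) I would proceed in two short steps: first construct an extension $\bar\nu \colon P^{10}(8) \to S^6$ of $\nu$, and then lift $\bar\nu$ through the pinch map $q \colon P^6(8) \to S^6$ to obtain the desired map $P^{10}(8) \to P^6(8)$.

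For the extension, the Puppe sequence associated to the cofibration $S^9 \xrightarrow{8} S^9 \xrightarrow{i} P^{10}(8)$ shows that $\nu$ extends along $i$ if and only if $8\nu = 0$ in $\pi_9(S^6)$. Since $9 < 2 \cdot 6 - 1$, Freudenthal gives $\pi_9(S^6) \cong \pi_3^s \cong \mathbb{Z}/24$, which at the prime $2$ becomes $\mathbb{Z}/8$ with $\nu$ a generator; hence $8\nu = 0$ and an extension $\bar\nu$ exists.

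For the lift, applying $[P^{10}(8), -]$ to the cofibration $S^5 \xrightarrow{8} S^5 \to P^6(8) \xrightarrow{q} S^6 \xrightarrow{8} S^6$ and using exactness shows that $\bar\nu$ lifts through $q$ precisely when $8\bar\nu = 0$ in $[P^{10}(8), S^6]$. To establish this vanishing, apply the contravariant functor $[-, S^6]$ to the Puppe sequence for $P^{10}(8)$; combined with the stable-range vanishing $\pi_{10}(S^6) = \pi_4^s = 0$, this identifies $[P^{10}(8), S^6]$ at the prime $2$ with $\ker(\times 8 \colon \pi_9(S^6) \to \pi_9(S^6)) = \mathbb{Z}/8$. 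Since multiplication by $8$ is the zero endomorphism of $\mathbb{Z}/8$, we have $8\bar\nu = 0$ for free, and any resulting lift $g \colon P^{10}(8) \to P^6(8)$ satisfies $q \circ g \circ i = \bar\nu \circ i = \nu$, giving the claimed commutative diagram.

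There is no genuine obstacle once one is willing to work $2$-locally: both the extension and the lifting problems are controlled by multiplication by $8$ on $\pi_9(S^6) \cong \mathbb{Z}/8$, which is the zero map. The only point deserving brief care is the identification of $[P^{10}(8), S^6]$ with $\mathbb{Z}/8$, which uses the stable vanishing $\pi_{10}(S^6) = \pi_4^s = 0$.
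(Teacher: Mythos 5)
Your first step is fine: extending $\nu$ over the top cell of $P^{10}(8)$ uses the co-exactness of the Puppe sequence of $S^9\xrightarrow{8}S^9$ applied to $[-,S^6]$, and $2$-locally $8\nu=0$. The second step, however, contains a genuine error. You apply the covariant functor $[P^{10}(8),-]$ to a \emph{cofibration} sequence and invoke exactness, but cofibration sequences only induce exact sequences of the contravariant functor $[-,Y]$; the sequence $[X,P^6(8)]\xrightarrow{q_*}[X,S^6]\xrightarrow{8_*}[X,S^6]$ is not exact in general. Exactness of that shape is a property of \emph{fibration} sequences, and the relevant fibre here is not what the cofibre sequence suggests: the homotopy fibre $F$ of the pinch map $q\colon P^6(8)\to S^6$ has the homotopy type of $S^5\cup e^{10}\cup e^{15}\cup\cdots$ (Gray), so since $P^{10}(8)$ carries a $10$-cell there is a potential lifting obstruction in $H^{10}(P^{10}(8);\pi_9(F))\cong\mathbb{Z}/2$ that your argument never touches. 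The gap is not cosmetic. Concretely, the paper's exact sequence $0\to\pi_9(F)\to\pi_9(P^6(8))\to\pi_9(S^6)\to 0$ reads $0\to\mathbb{Z}/2\to\pi_9(P^6(8))\to\mathbb{Z}/8\to 0$; if this extension were $\mathbb{Z}/16$ with generator $\ell$ mapping to $\nu$, then any $g\colon P^{10}(8)\to P^6(8)$ would restrict on $S^9$ to a class killed by $8$, hence $q\circ g\circ i\in\{0,8\nu\}=\{0\}$ and the lemma would be \emph{false} --- yet your argument would still ``prove'' it. An argument that cannot distinguish these two cases is missing the essential input.

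The paper's route supplies exactly that input: it first lifts $\nu$ (on the bottom cell only) to the fibre $S^6\{8\}$ of the degree $8$ map and compresses into $\mathrm{sk}_9(S^6\{8\})\simeq P^6(8)$ to get $\ell\colon S^9\to P^6(8)$ with $q\ell=\nu$; it then computes $\pi_9(F)=\mathbb{Z}/2$ (using $8w_5=0$) and deduces $\pi_9(P^6(8))\cong\mathbb{Z}/2\oplus\mathbb{Z}/8$, so that $8\ell=0$ and $\ell$ extends over the $10$-cell of $P^{10}(8)$. To repair your proof you would need to carry out this (or an equivalent) computation of $\pi_9(F)$ and of the extension problem; the formal manipulation of the cofibre sequence cannot substitute for it.
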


\begin{proof}
Since $8\circ \nu=\nu\circ 8=0$ in $\pi_9(S^6)$, $\nu$ lifts to the fibre $S^6\{8\}$ of the degree $8$ map, and since $\mathrm{sk}_9(S^6\{8\})\simeq P^6(8)$, it follows that $\nu$ factors as $S^9 \stackrel{\ell\,}{\to} P^6(8) \stackrel{q\,}{\to} S^6$. It suffices to show that $\ell$ has order $8$. The fibre $F$ of the pinch map $q$ has the homotopy type of a CW-complex $S^5 \cup e^{10} \cup e^{15}\cup\cdots$ where the first attaching map is $8w_5=0$ by \cite[Corollary~5.8]{G}. It follows that $\pi_9(F)=\mathbb{Z}/2$. The short exact sequence
\[ 0=\pi_{10}(S^6) \longrightarrow \pi_9(F) \longrightarrow \pi_9(P^6(8)) \stackrel{q_\ast}{\longrightarrow} \pi_9(S^6) \stackrel{0}{\longrightarrow} \pi_8(F) \]
therefore implies $\pi_9(P^6(8))=\mathbb{Z}/2 \oplus \mathbb{Z}/8$. In particular, $\ell$ has order $8$.
\end{proof}

\end{document}